\documentclass[3p]{elsarticle}
\usepackage{longtable}
\usepackage{color}
\usepackage{hyperref}
\usepackage{amssymb}
\bibliographystyle{elsarticle-num}
\biboptions{sort}
\journal{Discrete Mathematics}

\newtheorem{theorem}{Theorem}
\newtheorem{lemma}{Lemma}
\newtheorem{proposition}{Proposition}
\newtheorem{corollary}{Corollary}
\newtheorem{remark}{Remark}

\newenvironment{proof}[1][\hspace{-1.0ex}]%
{\addvspace{1mm}{\bf Proof\hspace{1.0ex}{#1}.} }%
{\quad$\square$\par\addvspace{1mm}}

\begin{document}

\begin{frontmatter}

\title{The extended 1-perfect trades in small hypercubes}

\author[mymainaddress]{Denis S. Krotov}
\ead{krotov@math.nsc.ru}

\address[mymainaddress]{Sobolev Institute of Mathematics, pr. Akademika Koptyuga 4, Novosibirsk, Russia}

\begin{abstract}
An extended $1$-perfect trade is a pair $(T_0,T_1)$ of two disjoint binary distance-$4$ even-weight codes 
such that the set of words at distance $1$ from $T_0$ coincides with the set of words at distance $1$ from $T_1$.
Such trade is called primary if any pair of proper subsets of $T_0$ and $T_1$ is not a trade.
Using a computer-aided approach, 
 we classify nonequivalent primary extended $1$-perfect trades of length $10$,
constant-weight extended $1$-perfect trades of length $12$, 
and Steiner trades derived from them.
In particular, all Steiner trades with parameters $(5,6,12)$ are classified.
\end{abstract}

\begin{keyword}
trades\sep
bitrades\sep
1-perfect code\sep 
Steiner trades\sep
small Witt design
\MSC[2010] 05B30 \sep  05B07 \sep 05E99 
\end{keyword}

\end{frontmatter}

\section{Introduction}

Trades of different types are used to study, construct, and classify different kinds
of combinatorial objects (codes, designs, matrices, tables, etc.).
Trades are also studied independently, as some natural generalization of  objects of the corresponding type.
In the current paper, 
we classify small (extended) $1$-perfect binary trades.
The $1$-perfect trades are objects related to $1$-perfect codes 
(perfect codes with distance $3$).
A $1$-perfect code is a set $C$ of vertices of a graph such that $|C\cap B|=1$ for every ball $B$ of radius $1$.
A $1$-perfect trade is a pair $(T_0,T_1)$ of disjoint vertex sets of a graph such that 
$|T_0\cap B|=|T_1\cap B|\in\{0,1\}$ for every ball $B$ of radius $1$.
Formally, the $1$-perfect trades generalize the pairs of disjoint $1$-perfect codes;
in some cases, for every $1$-perfect code $C$ a disjoint mate $C'$ can be explicitly constructed,
e.g., $C'=C+100...0$ in a Hamming space.

In the theory of $1$-perfect codes, trades play an important role for the construction of codes with different properties
and the evaluation of their number.
There are not so many works where the class of $1$-perfect trades is studied independently 
\cite{Pot12:spectra}, \cite{VAK:2008}, \cite{VorKro:2014en};
however, the subsets of $1$-perfect codes called $i$-components, or switching components,
which are essentially a special kind of trade mates, are used in many constructions of such codes,
see the surveys in \cite{AvgKro:embed}, \cite{Hed:2008:survey}, \cite{Ost:2012:switching}, \cite{Rom:survey}, \cite{Sol:switchings}.

In the binary case, when the graph is the $n$-cube,
$1$-perfect codes exist if and only if $n+1$ is a power of two (see, e.g., \cite{MWS}), 
while $1$-perfect trades exist for every odd $n$ \cite{VAK:2008}
(it can be easily established by the local analysis that the size $n+1$ of a ball must be even if a $1$-perfect trade exists; 
see another explanation of this in Section~\ref{ss:cf}).
The last fact allows to consider recursive approaches in constructing and studying trades 
and to collect some experimental material for small $n$.
Note that one of the standard approaches to study $1$-perfect binary codes is to consider 
extended $1$-perfect codes,
taking into account a natural bijection between these two classes;
in this paper, we also favor the framework of extended $1$-perfect codes and, respectively,
extended $1$-perfect trades.

As a part of the study of properties of the classified trades, 
we consider the connection between $1$-perfect trades and 
Steiner trades. 
The Steiner trades are well known in the theory of combinatorial designs,
and there is a lot of literature on this topic, see the surveys \cite{HedKho:trades}, \cite{Street:trades}.
All vertices of a $1$-perfect trade at minimum distance from some fixed 
``non-trade'' vertex form a Steiner trade;
we consider the question which Steiner trades can be derived from  
$1$-perfect trades in such a way.

The paper is organized as follows. After the definitions (Section~\ref{s:def})
and the preliminary results (Section~\ref{s:aux}),
we classify primary extended $1$-perfect trades of length $8$.
The next three sections are devoted to computational results. 
In Section~\ref{s:10}, we classify 
the extended $1$-perfect trades of length $10$; 
in Section~\ref{s:12}, we classify 
the constant-weight
extended $1$-perfect trades of length $12$.
The lists of trades are given in tables, together with some additional information 
(automorphism group, dual space, derived Steiner trades, connection with the Witt design).
In Section~\ref{s:constr}, we describe a concatenation construction of extended $1$-perfect trades,
showing that small trades can be utilized to construct trades of larger lengths; 
the construction also demonstrates the connection of the $1$-perfect trades with the latin trades, 
which are widely studied in the theory of latin squares \cite{Cav:rev} and latin hypercubes \cite{Potapov:2013:trade}.

In our computer-aided classification, we used general principles described in \cite{KO:alg}.
The programs were written in {\tt c++} (early versions used {\tt sage} \cite{sage});
{\tt nauty} \cite{nauty2014} was used to deal with automorphisms and isomorphisms.

\section{Definitions}\label{s:def}

\newcommand\HammingGraphalvedCube[1]{\ensuremath{\frac12\mathrm{H}(#1)}}
\newcommand\SteinerS[2]{\mathrm{S}(#1{-}1,#1,#2)}
\newcommand\HammingGraph[1]{\ensuremath{\mathrm{H}(#1)}}

We consider simple graphs $G= (\mathrm{V}(G),\mathrm{E}(G))$.
The \emph{distance} $\mathrm{d}(x,y)$ between two vertices $x$ and $y$ of a connected graph 
is defined as the minimum length
of a path connecting $x$ and $y$.
Two sets $C$ and $S$ of vertices of a graph are \emph{equivalent} 
if there is an automorphism $\pi$ of the graph such that $\pi(S)=C$.
Two pairs $(C_0,C_1)$ and $(S_0,S_1)$ of sets of vertices of a graph are \emph{equivalent} 
if there is an automorphism $\pi$ of the graph such that either $\pi(S_0)=C_0$ and $\pi(S_1)=C_1$,
or $\pi(S_0)=C_1$ and $\pi(S_1)=C_0$. 
The \emph{automorphism group}, denoted $\mathrm{Aut}(C)$, of a vertex set $C$ is defined as its stabilizer in the graph automorphism group.

\subsection{Hamming graphs, halved $n$-cubes, and Johnson graphs}

The \emph{Hamming graph} \HammingGraph{n,q} (if $q=2$, the \emph{$n$-cube} \HammingGraph{n}) is a graph whose vertices are the words of length $n$ over the alphabet 
$\{0,\ldots,q-1\}$,
two words being adjacent if and only if they differ in exactly one position.
The \emph{weight} $\mathrm{wt}(x)$ of a word $x$ is the number of nonzeros in $x$.

The \emph{halved $n$-cube} \HammingGraphalvedCube{n} is a graph whose vertices are the even-weight 
(or odd-weight) binary words of length $n$,
two words being adjacent if and only if they differ in exactly two positions.

The \emph{Johnson graph} J$(n,w)$ is a graph whose vertices are the weight-$w$ 
binary words of length $n$,
two words being adjacent if and only if they differ in exactly two positions.

It is known (see, e.g., 
\cite[Th.~9.2.1]{Brouwer},
\cite[p.~265]{Brouwer},
\cite[Th.~9.1.2]{Brouwer}) that any automorphism of \HammingGraph{n}, \HammingGraphalvedCube{n\ge 5}, or J$(n,w)$ 
is a composition of a coordinate permutation and a translation to some binary word $x$, 
which is arbitrary in the case of \HammingGraph{n}, even-weight for \HammingGraphalvedCube{n}, the all-zero $0^n$ or all-one $1^n$ for J$(2w,w)$,
and only $0^n$ for J$(n,w)$, $n\ne 2w$. 
For a vertex set $C$, in addition to $\mathrm{Aut}(C)$, we will use the notation $\mathrm{Sym}(C)$, which denotes
the set of all coordinate permutations that stabilize $C$.

The \emph{Hamming distance} $\mathrm{d_H}(x,y)$ 
between two words $x$ and $y$ of the same length is the number of coordinates in which $x$ and $y$ differ, i.e., the distance in the corresponding Hamming graph.
Note that the graph distance in a Johnson graph or the halved $n$-cube is the half of the Hamming distance: $\mathrm{d}(x,y) = \mathrm{d_H}(x,y)/2$.

\subsection{$1$-perfect codes,
extended $1$-perfect codes,
Steiner systems, 
latin hypercubes}

A \emph{$1$-perfect code} is a set of vertices of \HammingGraph{n} such that every radius-$1$ ball contains exactly one codeword.

An \emph{extended $1$-perfect code} is a set of vertices of \HammingGraphalvedCube{n} such that every maximum clique contains exactly one codeword. 
Note that the maximum cliques in \HammingGraphalvedCube{n}, $n\ge 5$, are radius-$1$ spheres in \HammingGraph{n}.
There is a one-to-one correspondence between the $1$-perfect codes in \HammingGraph{n-1} 
and the extended $1$-perfect codes in \HammingGraphalvedCube{n}: 
if, for some fixed $i\in\{1,\ldots,n\}$ (to be explicit, take, e.g., $i=n$), 
we delete the $i$th symbol from all codewords of an extended $1$-perfect code, 
then the resulting set will be a $1$-perfect code (inversely, 
the deleted symbol 
can be uniquely reconstructed as the modulo-$2$ sum of the other symbols).
Extended $1$-perfect codes in \HammingGraphalvedCube{n} 
and $1$-perfect codes in \HammingGraph{n-1}
exist if and only if $n$ is a power of $2$.

A \emph{Steiner $k$-tuple system} $\SteinerS{k}{n}$
is a set of vertices of J$(n,k)$, $n\ge 2k$, 
such that every maximum clique contains exactly one word from the set.
S$(2,3,n)$ and S$(3,4,n)$ are known as STS$(n)$ and SQS$(n)$,
Steiner triple systems and Steiner quadruple systems, respectively. 
Note that every maximum clique in J$(n,k)$ consists of all weight-$k$ binary words of length $n$ adjacent 
in \HammingGraph{n} with a given  word of weight $k-1$ 
(in the case $n=2k$, of weight $k-1$ or $k+1$).

There is a well-known connection between 
$1$-perfect codes and Steiner systems.
If a $1$-perfect 
(extended $1$-perfect) code of length $n$
contains the all-zero word $0^n$,
then its weight-$3$ (weight-$4$, respectively) codewords
 form a 
STS($n$) (SQS($n$)), which is called \emph{derived} from the code.
However, in contrast to the $1$-perfect codes, 
STS($n$)s 
exist for all $n\equiv 1,3\bmod 6$,
and
SQS($n$)s
exist for all $n\equiv 2,4\bmod 6$.
Clearly, such STS or SQS  cannot be derived if $n+1$ (respectively, $n$) is not a power of $2$.
The question if there exist non-derived STS($n$) (SQS($n$)) 
when $n+1$ (respectively, $n$) is a power of $2$ is a known open problem,
which is solved only for $n\le 16$ \cite{OstPot2007}, \cite[Satz 8.5]{Hergert:85}.

The parameters S$(5,6,12)$ play a special role in our classification.
A sextuple system S$(5,6,12)$ 
found in \cite{Carmichael:31} and \cite{Witt:37}
is unique up to equivalence \cite{Witt:37} and known as the
\emph{small Witt design}.

A \emph{latin hypercube} (if $n=3$, a \emph{latin square}) 
is a set of vertices of \HammingGraph{n,q} 
such that every maximum clique contains exactly one word from the set.
A maximum clique of \HammingGraph{n,q} consists of $q$ words differing in only one coordinate.
Often, a latin hypercube is imagined an $(n-1)$-dimensional table of size $q\times\ldots\times q$ filled by the values of the last, $n$th, coordinate.

\subsection{$1$-perfect, extended $1$-perfect, Steiner, and latin trades}
A \emph{$1$-perfect trade}
is a pair $(T_0,T_1)$ of disjoint nonempty sets 
of vertices of \HammingGraph{n} such that for every radius-$1$ ball $B$ it holds
\begin{equation}\label{eq:trade}
|T_0\cap B| = |T_1\cap B| \in \{0,1\}.
\end{equation}
An \emph{extended $1$-perfect trade} 
(an \emph{$\SteinerS{k}{n}$ trade}, a \emph{latin trade}) 
is a pair $(T_0,T_1)$ of disjoint nonempty sets of vertices of \HammingGraphalvedCube{n} 
(of J$(n,k)$ with $n\ge 2k$, of \HammingGraph{n,q}, respectively)
such that (\ref{eq:trade}) holds
for every maximum clique $B$. 

In what follows, \emph{trade} always means one of the four considered types of trades.
Each component $T_i$ of a trade $(T_0,T_1)$ is called a trade \emph{mate}.

\begin{remark}\rm
Often, trades are defined as unordered pairs $\{ T_0,T_1 \}$ \cite{HedKho:trades}.
In this paper, however, we find it convenient to use the ordered version of the definition.
 Also, it should be noted that there is a different terminology in the literature 
 (especially, in the works on latin trades, see, e.g., \cite{Cav:rev}),
where $(T_0,T_1)$ is called a bi\-trade, and each of $T_0$, $T_1$ is called a trade.
\end{remark}

The \emph{volume} of a trade $(T_0,T_1)$ is the cardinality of $T_0$ 
(equivalently, of $T_1$, 
as (\ref{eq:trade}) implies $|T_0|=|T_1|$).
The \emph{length} of $(T_0,T_1)$ means the length of words $T_0$ and $T_1$ consist of.
A trade $(T_0,T_1)$ is called \emph{primary} if it cannot be partitioned into two trades
$(T'_0,T'_1)$ and $(T''_0,T''_1)$, $T_0 = T'_0 \cup T''_0$, $T_1 = T'_1 \cup T''_1$.
The role of trades in the study of (extended) $1$-perfect codes, Steiner systems,
latin squares and hypercubes
is emphasized by the following fact: if $C_0$ and $C_1$ are different $1$-perfect codes,
extended $1$-perfect codes, Steiner $k$-tuple systems, or latin hypercubes 
with the same parameters,
then $(T_0,T_1)$, where $T_i = C_i\backslash C_{1-i}$, 
is a trade of the corresponding type.
In particular, we have $C_1 = T_1 \cap C_0 \backslash T_0$, 
i.e., with a trade we can get one object from the other.

Extended $1$-perfect trades and $1$-perfect trades 
are in the same one-to-one correspondence
as extended $1$-perfect codes and $1$-perfect codes. 
If $(T_0,T_1)$ is a $1$-perfect or extended $1$-perfect trade and $x\not\in T_0,T_1$ is a word at distance
$k$ from $T_0$, then the weight-$k$ words of $T_0+x$ and $T_1+x$ form an $\SteinerS{k}{n}$ trade, called
\emph{derived} from $(T_0,T_1)$.
Latin trades can be used
for the construction of trades of other types 
(see Section~\ref{s:constr}, the only place in this paper where the latin trades appear).

For trades consisting of words of weight $n/2$
(we call them \emph{constant-weight} trades), 
there is the following simple but remarkable correspondence.
\begin{proposition}\label{p:perf-stein}
  A pair $(T_0,T_1)$ of weight-$k$ binary words of length $2k$ is a $\SteinerS{k}{2k}$ trade 
  if and only if it is an extended $1$-perfect trade.
\end{proposition}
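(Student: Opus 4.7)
My plan is to directly match up the maximum cliques of the two ambient graphs on the vertex set of weight-$k$ words.

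First, I would recall the descriptions of maximum cliques given in the definitions section. A maximum clique in $\HammingGraphalvedCube{2k}$ consists of all even-weight words at Hamming distance $1$ from a fixed odd-weight word $y$; call it $B_y$. A maximum clique in $\mathrm{J}(2k,k)$ consists of all weight-$k$ words at Hamming distance $1$ from a fixed binary word of weight $k-1$ \emph{or} $k+1$; call it $B'_y$.

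The key observation is that when both $T_0$ and $T_1$ consist only of weight-$k$ words, only certain halved-cube cliques can matter. Indeed, a word of weight $k$ lies in $B_y$ if and only if $\mathrm{wt}(y)\in\{k-1,k+1\}$ (since the neighbors of $y$ in $\HammingGraph{2k}$ have weight $\mathrm{wt}(y)\pm 1$). For all other odd weights of $y$ one has $T_i\cap B_y=\emptyset$ automatically, so the trade condition (\ref{eq:trade}) is trivially satisfied for such cliques. For $y$ of weight $k\pm 1$, the set $B_y$ restricted to weight-$k$ words is precisely $B'_y$, so $T_i\cap B_y=T_i\cap B'_y$.

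From this I conclude in one line that the families of trade conditions coincide: the extended $1$-perfect condition quantified over $B\subset \HammingGraphalvedCube{2k}$ reduces, on weight-$k$ data, exactly to the Steiner condition quantified over $B'\subset \mathrm{J}(2k,k)$. The only point that requires a moment's care is the ``$n=2k$'' clause in the definition of maximum cliques of a Johnson graph, where the cliques based at weight $k+1$ words must also be accounted for; the halved-cube framework treats both types on equal footing, so no asymmetry appears. I do not anticipate any genuine obstacle in this argument: it is essentially a bookkeeping exercise on the two clique descriptions already recorded in Section~\ref{s:def}.
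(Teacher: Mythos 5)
Your proposal is correct and follows essentially the same route as the paper: both arguments match the maximum cliques of $\HammingGraphalvedCube{2k}$ against those of $\mathrm{J}(2k,k)$, observing that each halved-cube clique meets the weight-$k$ layer either trivially or in exactly a Johnson clique, and that every Johnson clique arises this way. Your version merely makes explicit (via the weight of the sphere's center $y$) which cliques fall into which case, which the paper leaves implicit.
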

\begin{proof}
Let $G$ be the set of vertices of J$(2k,k)$, and also a subset of the vertex set of 
$\HammingGraphalvedCube{2k}$. 
For every maximum clique $B$ in $\HammingGraphalvedCube{2k}$, 
the set $B\cap G$ is either empty or 
a maximum clique in J$(2k,k)$. 
Moreover, every maximum clique in J$(2k,k)$ is represented 
in such a way.
Trivially, $|T_i \cap B | = |T_i \cap (B\cap G)|$  holds for every subset $T_i$ of $G$.
So, for such subsets, the definitions of $\SteinerS{k}{2k}$ trades and extended $1$-perfect trades
are equivalent.
\end{proof}

A tuple $(T_0,\ldots,T_{k-1})$ of $k\ge 2$ sets 
is called a \emph{$k$-way trade} if every two different sets from it form
a trade. The concepts defined above for the trades 
(length, volume, primary, derived) and Proposition~\ref{p:perf-stein} 
are naturally extended to 
$k$-way trades.

\subsection{Characteristic functions}\label{ss:cf}
Via the characteristic functions, the trades of the considered types 
can be represented as eigenfunctions of the corresponding graphs
with some special discrete restrictions. 
This allows the trades to be studied 
using approaches of algebraic combinatorics, 
see, e.g., \cite{KMP:16:trades}. 
An {\em eigenfunction} of a graph $G=(V,E)$ 
corresponding to an eigenvalue $\theta$ 
is a real-valued function $f$ over $V$ that is
not constantly zero and satisfies 
$\theta f(x)=\sum_{y:\{y,x\}\in E} f(y)$
for every $x$ in $V$.
The eigenvalues of the Hamming, Johnson, and halved $n$-cube 
graphs can be found,
e.g., in 
\cite[Th.~9.2.1]{Brouwer},
\cite[Th.~9.1.2]{Brouwer},
\cite[p.~264]{Brouwer}.


We define the {\em characteristic function} of a 
 trade $(T_0,T_1)$ as the $\{0,1,-1\}$-function 
$\chi_{(T_0,T_1)} \stackrel{\scriptscriptstyle\mathrm{def}}= \chi_{T_0}-\chi_{T_1}$, 
where $\chi_{T}$ denotes the characteristic $\{0,1\}$-function of a vertex set $T$.

It is straightforward that the characteristic function of a 
$1$-perfect, extended $1$-perfect, $\SteinerS{k}{n}$, or latin trade
is an eigenfunction of the corresponding graph 
with the eigenvalue $-1$, $-n/2$, $-k$, $-n$, respectively.
Moreover, the characteristic function of an extended $1$-perfect trade,
considered as a function over the vertex set of \HammingGraph{n}, 
is an eigenfunction of \HammingGraph{n} with the eigenvalue $0$.

The graph \HammingGraph{n} has an eigenvalue $-1$ ($0$) if and only if $n$ is odd 
(even, respectively).
This gives a necessary condition for the existence of $1$-perfect trades 
(extended $1$-perfect trades, respectively),
which turns out to be sufficient, see, e.g., \cite{VAK:2008}.

For the other two types of trades, 
there are no restrictions on the parameters:
$-k$ is the smallest eigenvalue of all J$(n,k)$s, $n\ge 2k$;
and $-n$ is the smallest eigenvalue of all \HammingGraph{n,q}s.
For all parameters, $\SteinerS{k}{n}$ trades and latin trades exist, 
see, e.g., \cite{HedKho:trades}, \cite{Potapov:2013:trade}.

\subsection{The rank and the dual space}
The rank is one of the characteristics of nonlinear codes
that say how far a code is from being linear.
In the theory of $1$-perfect codes, 
the concept of rank plays an important role;
the structure of arbitrary 
$1$-perfect codes of small rank was studied in \cite{AvgHedSol:class}, \cite{HK:q-ary}.
In the current paper,
we use the affine rank of binary codes,
which is invariant under the automorphisms of the $n$-cube.

We consider the set of all binary 
words of length $n$ as a vector-space $F^n$ over the finite field
of order $2$, with coordinate-wise modulo-$2$ addition and multiplication by a constant.

Let $C$ be a set of binary words of length $n$ (a code).
A binary word $x=(x_0,\ldots,x_{n-1})$ is said to be 
\emph{orthogonal} (\emph{antiorthogonal}) to $C$ if
$x_0c_0+\ldots +x_{n-1}c_{n-1}\equiv 0 \bmod 2$ 
(respectively, $\mbox{} \equiv 1 \bmod 2$) for all
$(c_0,\ldots,c_{n-1})$ from $C$.
The set of all 
binary words that are
orthogonal or antiorthogonal to $C$
is called the \emph{dual space} of $C$ (in affine sense) and denoted $C^\perp$.
\newcommand\rank{\mathrm{rank}}
The (affine) \emph{rank}, $\rank(C)$, of a code $C\subseteq F^n$,
is the minimum dimension
of an affine subspace of $F^n$ including $C$.
It is straightforward that $\rank(C)+\rank(C^\perp)=n$.

\section{Preliminary results}\label{s:aux}
The following two facts, Lemma~\ref{p:antipod} and its corollary, are well known.

\begin{lemma}\label{p:antipod}
Let $\phi$ be an eigenfunction of \HammingGraph{n} with the eigenvalue $n-2i$, $i\in\{0,1,\ldots,n\}$.
For every vertex $x$ of \HammingGraph{n}, it holds $\phi(x+1^n) = (-1)^i\phi(x)$,
where $1^n$ is the all-one word.
\end{lemma}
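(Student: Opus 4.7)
The plan is to use the Walsh-character diagonalisation of the adjacency operator of $\mathrm{H}(n)$. For each subset $S\subseteq\{1,\ldots,n\}$, define the character $\chi_S\colon F^n\to\{\pm 1\}$ by $\chi_S(x)=(-1)^{\sum_{j\in S}x_j}$. A one-line computation gives $\sum_{y\sim x}\chi_S(y)=\chi_S(x)\bigl(|S^c|-|S|\bigr)=(n-2|S|)\,\chi_S(x)$, so $\chi_S$ is an eigenfunction of $\mathrm{H}(n)$ with eigenvalue $n-2|S|$. Since the $2^n$ characters $\chi_S$ form a basis of $\mathbb{R}^{F^n}$, those with $|S|=i$ form a basis of the $(n-2i)$-eigenspace $E_i$.

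Next I would evaluate translation by $1^n$ on these basis vectors: for $|S|=i$ we have $\chi_S(x+1^n)=(-1)^{\sum_{j\in S}(x_j+1)}=(-1)^{|S|}\chi_S(x)=(-1)^i\chi_S(x)$. Writing any given eigenfunction $\phi\in E_i$ as a real linear combination of the $\chi_S$ with $|S|=i$, and extending the identity above by linearity, yields $\phi(x+1^n)=(-1)^i\phi(x)$ for every $x\in F^n$, as required.

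No serious obstacle is expected: the only input beyond elementary calculations is the character description of the $\mathrm{H}(n)$-eigenspaces, which is standard and already cited in the paper via Brouwer. An essentially equivalent alternative plan would observe that the translation operator $T\colon\phi\mapsto\phi(\cdot+1^n)$ commutes with the adjacency operator and is an involution, so on each eigenspace it acts as $\pm 1$, and then determine the correct sign on $E_i$ by evaluating $T$ on any single nonzero representative, e.g.\ a character $\chi_S$ with $|S|=i$.
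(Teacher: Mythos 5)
Your proof is correct and follows essentially the same route as the paper: both diagonalise the adjacency operator of $\mathrm{H}(n)$ via the Walsh characters (your $\chi_S$ are exactly the paper's $\phi_y$ with $S$ the support of $y$), identify the $(n-2i)$-eigenspace as the span of the weight-$i$ characters, and evaluate the translation by $1^n$ on each basis element. The only cosmetic difference is that the paper explicitly verifies the orthogonality of the characters to justify the basis claim, whereas you cite it as standard.
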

\begin{proof}
  It is straightforward that for every vertex $y = (y_1,\ldots,y_n)$, the function 
  $\phi_y(x_1,\ldots,x_n) \stackrel{\scriptscriptstyle\mathrm{def}}= (-1)^{y_1x_1+\ldots+y_nx_n}$ is an eigenfunction corresponding to the eigenvalue
  $n-2\mathrm{wt}(y)$ 
  (indeed, every vertex has $n-\mathrm{wt}(y)$ neighbors with the same values of $\phi_y$ and $\mathrm{wt}(y)$ neighbors with the opposite values). 
  All these $2^n$ functions are mutually orthogonal: for $y\ne z$,
  $$ \sum_{x\in \{0,1\}^n} \phi_y(x) \phi_z(x) 
  = \sum_x (-1)^{y_1x_1+\ldots+y_nx_n}(-1)^{z_1x_1+\ldots+z_nx_n} = \sum_x (-1)^{(y_1+z_1)x_1+\ldots+(y_n+z_n)x_n} = 0.$$
  Hence, they form a basis of the space of real-valued functions on the vertex set of \HammingGraph{n}.
  Consequently, any eigenfunction $\phi$ with the eigenvalue $n-2i$ is a linear combination of
  $\phi_y$ with $\mathrm{wt}(y)=i$. 
  So, it is sufficient to check that the statement of the lemma holds for every such $\phi_y$:\\[1mm]
  \mbox{}\hfill$\phi_y(x+1^n) = (-1)^{y_1(x_1+1)+\ldots+y_n(x_n+1)} = (-1)^{y_1+\ldots+y_n} (-1)^{y_1x_1+\ldots+y_nx_n} = (-1)^{i}\phi_y(x).$\hfill\mbox{}
\end{proof} 
 Since the characteristic function of any $1$-perfect or extended $1$-perfect trade
 (in particular, any $\SteinerS{k}{2k}$ trade, by Proposition~\ref{p:perf-stein})
 is an eigenfunction of the Hamming graph 
 corresponding to the eigenvalue $-1$ or $0$ (see Section~\ref{ss:cf}),
 we conclude from Lemma~\ref{p:antipod}
that such a trade
is self-complementary, in the following sense.
\begin{corollary}\label{c:antipod}
  Let $(T_0,T_1)$ be a $1$-perfect trade in \HammingGraph{n-1}
  or an extended $1$-perfect trade in \HammingGraphalvedCube{n}.
  Denote $\overline T_j \stackrel{\scriptscriptstyle\mathrm{def}}= T_j + 11...1 = \{x+11...1 \mid x\in T_j\}$, $j=0,1$.
  
  (i) If $n\equiv 0 \bmod 4$,  
  then $T_0 = \overline T_0 $ and  $T_1 = \overline T_1$. 
  
  (ii) If $n\equiv 2 \bmod 4$,  then $T_0 = \overline T_1$.
  \end{corollary}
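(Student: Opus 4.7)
The plan is to apply Lemma~\ref{p:antipod} directly to the characteristic function $\chi = \chi_{(T_0,T_1)} = \chi_{T_0} - \chi_{T_1}$ of the trade, which is $\{-1,0,1\}$-valued, with $T_0$ the $+1$-preimage and $T_1$ the $-1$-preimage. Read off the claim for the mates from the pointwise identity that the lemma produces.

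First I would fix the appropriate Hamming graph and eigenvalue in each case so that Lemma~\ref{p:antipod} applies to $\chi$. For an extended $1$-perfect trade in $\HammingGraphalvedCube{n}$, Section~\ref{ss:cf} states that $\chi$, viewed on the vertex set of $\HammingGraph{n}$, is an eigenfunction of $\HammingGraph{n}$ with eigenvalue $0$; writing $0 = n - 2i$ gives $i = n/2$. For a $1$-perfect trade in $\HammingGraph{n-1}$, $\chi$ is an eigenfunction of $\HammingGraph{n-1}$ with eigenvalue $-1$, and $(n-1) - 2i = -1$ again yields $i = n/2$. In both situations the length of the all-ones word matches the ambient Hamming graph, so the two halves of the corollary can be treated uniformly.

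Next I would invoke Lemma~\ref{p:antipod} to obtain the pointwise identity $\chi(x + 11\ldots 1) = (-1)^{n/2}\chi(x)$ for every vertex $x$. Since $\chi$ takes only the values $-1$, $0$, $1$, this identity translates directly into set-theoretic statements about $T_0$ and $T_1$: if $n\equiv 0 \bmod 4$, then $(-1)^{n/2} = 1$ and translation by the all-ones word preserves each mate individually, giving $T_j = \overline{T_j}$ for $j = 0,1$; if $n\equiv 2 \bmod 4$, then $(-1)^{n/2} = -1$ and translation swaps the $+1$- and $-1$-preimages, giving $T_0 = \overline{T_1}$ (and, equivalently, $T_1 = \overline{T_0}$).

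I expect no genuine obstacle; the only subtlety is bookkeeping with the length, namely remembering that in the $1$-perfect case one works in $\HammingGraph{n-1}$ (where, by the parity remark in Section~\ref{ss:cf}, $n-1$ is odd precisely when $-1$ is an eigenvalue, i.e., $n$ is even as the corollary assumes), while in the extended case one works in $\HammingGraph{n}$; in both cases the parity exponent reduces to $n/2$, which is what makes the two parts of the statement parallel and lets one proof cover both.
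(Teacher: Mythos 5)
Your proposal is correct and follows exactly the paper's route: the paper derives the corollary from Lemma~\ref{p:antipod} applied to the characteristic function $\chi_{T_0}-\chi_{T_1}$, which is an eigenfunction with eigenvalue $-1$ (respectively $0$), and in both cases the exponent $i$ equals $n/2$, so the sign $(-1)^{n/2}$ distinguishes the cases $n\equiv 0$ and $n\equiv 2 \pmod 4$ precisely as you describe. Your write-up merely makes explicit the bookkeeping (solving $n-2i=-1$ in length $n-1$ versus $n-2i=0$ in length $n$, and translating the pointwise sign identity into the set-theoretic statements) that the paper leaves to the reader.
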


We see that there is an essential difference between the two cases, (i) and (ii). 
In case (ii), $T_1$ is uniquely determined from $T_0$. 
They are complementary to each other and thus equivalent.
In case (i), each element of a trade  $(T_0,T_1)$ is self-complementary, 
but $T_0$ does not uniquely define $T_1$ and vice versa;
$T_0$ and $T_1$ can be nonequivalent in this case (see Section~\ref{ss:descr} for examples).
In fact, for every $n\equiv 1 \bmod 4$, 
there exists a $3$-way $1$-perfect trade \cite{VAK:2008}.
For $k>3$, $k$-way trades can also exist.

The next lemma (a partial case of the construction in Section~\ref{s:constr}) guarantees that the number of different 
(and, in fact, the number of nonequivalent) $1$-perfect trades 
is monotonic in $n$. Here and everywhere in the paper, for a symbol $\sigma$ and a set $T$ of words,
by $T\sigma$ we denote the set of words obtained by appending $\sigma$ to the words of $T$ (for example, in the notation $T_000$ below, this rule is applied twice) .
\begin{lemma}[\cite{VAK:2008}]\label{p:extend}
Let $(T_0,T_1)$ be a (extended) $1$-perfect trade in \HammingGraph{n}. 
Then $(T_000\cup T_111,T_011\cup T_100)$ is a (extended) $1$-perfect trade in \HammingGraph{n+2}.
\end{lemma}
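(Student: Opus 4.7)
The plan is to verify the trade conditions for $(T'_0, T'_1) := (T_0 00 \cup T_1 11,\, T_0 11 \cup T_1 00)$ directly, by a case analysis on the appended suffix $(a,b) \in \{0,1\}^2$. First I would observe disjointness: the four sets $T_0 00$, $T_0 11$, $T_1 00$, $T_1 11$ are pairwise disjoint, since words with different suffixes automatically differ, and those sharing a suffix lie in disjoint copies of $T_0$ and $T_1$; hence $T'_0 \cap T'_1 = \emptyset$, and both are nonempty.

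Next, I fix an arbitrary maximum clique $B'$ in $\HammingGraph{n+2}$ (in the $1$-perfect case) or in $\HammingGraphalvedCube{n+2}$ (in the extended case) and write it as a radius-$1$ ball, respectively a radius-$1$ sphere, around some $v = (y, a, b)$ with $y \in \{0,1\}^n$. If $(a,b) \in \{(0,0),(1,1)\}$, then every word of $B'$ has suffix $aa$, $01$, or $10$; the suffix-$aa$ words are in natural bijection with the analogous maximum clique $B_y$ around $y$ in the original graph on $\{0,1\}^n$, while suffixes $01$ and $10$ do not occur in $T'_0 \cup T'_1$. A short bookkeeping check gives $|T'_j \cap B'| = |T_{j \oplus a} \cap B_y|$ for $j=0,1$, so the required equality $|T'_0 \cap B'| = |T'_1 \cap B'| \in \{0,1\}$ is exactly the trade property of $(T_0, T_1)$ applied to $B_y$.

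If $(a,b) \in \{(0,1), (1,0)\}$, then $B'$ contains exactly one word with suffix $00$, namely $(y,0,0)$, and exactly one with suffix $11$, namely $(y,1,1)$, together with words of suffixes $01$ or $10$ that do not appear in $T'_0 \cup T'_1$. Thus both $|T'_0 \cap B'|$ and $|T'_1 \cap B'|$ simplify to $[y \in T_0] + [y \in T_1]$, which lies in $\{0,1\}$ by the disjointness of $T_0$ and $T_1$; note that no appeal to the trade property is needed here.

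There is no serious obstacle; the argument is essentially bookkeeping. The only point requiring a little care is in the extended case: one should check that the parity of $\mathrm{wt}(v) = \mathrm{wt}(y) + a + b$ propagates correctly so that, when $(a,b) \in \{(0,0),(1,1)\}$, the reduced clique $B_y$ is indeed a maximum clique of $\HammingGraphalvedCube{n}$ and the hypothesis on $(T_0,T_1)$ applies; this is immediate from the definitions.
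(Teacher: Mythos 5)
Your proof is correct. Note that the paper itself supplies no argument for this lemma: it is attributed to the cited reference [VAK:2008] and is also noted to be the special case $m=2$, $q=2$ of the general concatenation construction of Proposition~\ref{p:constr} (with the latin trade $(\{00,11\},\{01,10\})$ in $\HammingGraph{2,2}$ and the inner trades $(T_0,T_1)$ and $(\{00\},\{11\})$), whose proof is in turn declared ``straightforward and omitted.'' Your direct case analysis on the appended suffix is exactly the bookkeeping that is being omitted, and all the pieces are in order: the disjointness of the four suffixed sets, the reduction of a clique centered at $(y,a,a)$ to the clique around $y$ with the roles of $T_0,T_1$ swapped when $a=1$, and the observation that for centers with suffix $01$ or $10$ both intersection counts equal $[y\in T_0]+[y\in T_1]\le 1$ with no appeal to the trade property. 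Your parity remark in the extended case is also the right thing to check; the only caveat, which the paper shares, is that the identification of maximum cliques of $\HammingGraphalvedCube{m}$ with radius-$1$ spheres of $\HammingGraph{m}$ is stated only for $m\ge 5$, so strictly speaking the degenerate length $n+2=4$ would need a separate (easy) inspection. Compared with the paper's route, your argument is self-contained and elementary, while the paper's buys generality: Proposition~\ref{p:constr} yields the same conclusion for arbitrary latin trades and $k$-way inner trades, together with the primarity and duality statements.
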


The following easy-to-prove fact plays a crucial role in our classification algorithm.

\begin{proposition}[{\cite[Theorem~1]{KMP:16:trades}}]\label{p:subgraph}
Suppose  $T_0$, $T_1$ are disjoint vertex sets of \HammingGraphalvedCube{x} (or J$(n,k)$). The pair $(T_0,T_1)$
is an extended $1$-perfect trade (an $\SteinerS{k}{n}$ trade, respectively) if and only if the subgraph induced by 
$T_0\cup T_1$ is bipartite with parts $T_0$, $T_1$ and regular of degree $n/2$ (of degree $k$, respectively).
\end{proposition}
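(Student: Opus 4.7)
The plan is to reduce both implications to a single double-counting argument over pairs (vertex, maximum clique). Let $G$ denote either \HammingGraphalvedCube{n} or J$(n,k)$ with $n\ge 2k$; let $M$ be the number of maximum cliques through a fixed vertex and $m$ the number of maximum cliques through a fixed edge of $G$. A brief inspection of the clique structure gives $(M,m)=(n,2)$ for \HammingGraphalvedCube{n} with $n\ge 5$ (max cliques are radius-$1$ spheres in \HammingGraph{n}, one per flipped coordinate at each vertex, and each edge corresponds to flipping one of two coordinates), $(M,m)=(k,1)$ for J$(n,k)$ with $n>2k$, and $(M,m)=(2k,2)$ for J$(2k,k)$ (where the two max-clique types, indexed by a common $(k{-}1)$-subset or a common $(k{+}1)$-superset, both contribute). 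In every case $M/m$ equals the claimed degree.

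For the forward direction, suppose $(T_0,T_1)$ is a trade. If two vertices of $T_0$ (or of $T_1$) were adjacent, the edge would lie in some maximum clique $B$ with $|T_0\cap B|\ge 2$, contradicting the trade condition. Hence the induced subgraph is bipartite with parts $T_0$, $T_1$. Now fix $x\in T_0$ and count in two ways the pairs $(y,B)$ with $B$ a maximum clique through $x$ and $y\in T_1\cap B\setminus\{x\}$: each of the $M$ such $B$ satisfies $|T_0\cap B|=1$ and hence $|T_1\cap B|=1$ by the trade condition, contributing $M$ pairs in total; on the other hand, each neighbor $y\in T_1$ of $x$ shares exactly $m$ maximum cliques with $x$, contributing $m\cdot\deg_{T_1}(x)$ pairs. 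Therefore $\deg_{T_1}(x)=M/m$, and the symmetric argument handles $x\in T_1$.

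For the converse, suppose the induced subgraph is bipartite with parts $T_0$, $T_1$ and regular of degree $M/m$. Bipartiteness immediately forces $|T_i\cap B|\le 1$ for every maximum clique $B$, since two vertices within $T_i$ would be non-adjacent while a clique forces adjacency. To get equality, fix $x\in T_0$: the same double count gives $\sum_B|T_1\cap B|=m\cdot\deg_{T_1}(x)=M$ where $B$ ranges over the $M$ maximum cliques through $x$, and since each summand is at most $1$, every such $B$ must have $|T_1\cap B|=1=|T_0\cap B|$. The symmetric argument starting from a vertex of $T_1$ handles all cliques meeting $T_1$, and cliques disjoint from $T_0\cup T_1$ trivially satisfy the condition.

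The main (minor) obstacle is cataloguing the incidence counts $M$ and $m$ uniformly across the three cases (halved cube, Johnson with $n>2k$, and Johnson with $n=2k$); once those local counts are in hand, the remainder is essentially a one-line double count applied in each direction.
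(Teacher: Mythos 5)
Your argument is correct. Note that the paper itself does not prove this proposition --- it is imported as Theorem~1 of \cite{KMP:16:trades} --- so there is no in-paper proof to compare against; what you have written is a valid self-contained substitute. The incidence counts are all right: in $\frac12\mathrm{H}(n)$ ($n\ge 5$) each vertex lies in $M=n$ maximum cliques (the radius-$1$ spheres of $\mathrm{H}(n)$ centered at its $n$ Hamming-neighbors) and each edge in $m=2$; in J$(n,k)$ with $n>2k$ the maximum cliques are the ``stars'' through a fixed $(k-1)$-set, giving $(M,m)=(k,1)$; and in J$(2k,k)$ both the $(k-1)$-set and $(k+1)$-set cliques have size $k+1$, giving $(M,m)=(2k,2)$. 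In every case each edge lies in at least one maximum clique, which is exactly what is needed both for bipartiteness in the forward direction and for $|T_i\cap B|\le 1$ in the converse, and $M/m$ equals the stated degree. The two double counts then close both implications; the only cosmetic remark is that in the forward count the exclusion of $x$ from $T_1\cap B$ is vacuous since $T_0$ and $T_1$ are disjoint.
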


\begin{corollary}\label{c:prime}
An extended $1$-perfect trade $(T_0,T_1)$ in \HammingGraphalvedCube{n} is primary 
if and only if the corresponding induced subgraph is connected. 
The same is true for the $\SteinerS{k}{n}$ trades.
\end{corollary}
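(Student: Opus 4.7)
The plan is to argue both directions through the graph-theoretic characterization given by Proposition~\ref{p:subgraph}. Let $H$ denote the subgraph induced by $T_0\cup T_1$. By Proposition~\ref{p:subgraph}, $H$ is bipartite with parts $T_0,T_1$ and regular of degree $n/2$ (in the Steiner case, of degree $k$); the argument below uses only these two properties, so it will apply uniformly to both types of trades.

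For the first direction, I would prove the contrapositive: if $H$ is disconnected, then $(T_0,T_1)$ is not primary. Pick any proper nonempty union of connected components of $H$ and let $T'_j$ be its intersection with $T_j$, and $T''_j$ the complementary intersection. Each of the two induced subgraphs on $T'_0\cup T'_1$ and $T''_0\cup T''_1$ is a disjoint union of connected components of $H$; since no edges of $H$ are lost, both subgraphs remain bipartite and regular of the required degree. Applying Proposition~\ref{p:subgraph} in the reverse direction gives two trades whose disjoint union is $(T_0,T_1)$, contradicting primarity. (One must verify that both $T'_0\cup T'_1$ and $T''_0\cup T''_1$ are nonempty, which follows from the properness of the choice of component union plus the fact that every vertex in a regular bipartite graph of positive degree has neighbors in the opposite part, so each side of the bipartition meets each piece.)

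For the converse, suppose $(T_0,T_1) = (T'_0\cup T''_0,T'_1\cup T''_1)$ is a nontrivial partition into two trades. By Proposition~\ref{p:subgraph}, the subgraph induced by $T'_0\cup T'_1$ is regular of degree $n/2$, and similarly for $T''_0\cup T''_1$. Fix $v\in T'_0$: within $H$ it has exactly $n/2$ neighbors, all lying in $T_1$; within the induced subgraph on $T'_0\cup T'_1$ it also has $n/2$ neighbors, all in $T'_1\subseteq T_1$. Hence every $H$-neighbor of $v$ lies in $T'_1$, so no edge of $H$ joins $v$ to $T''_1$. Running this argument symmetrically for vertices of $T''_0$, $T'_1$, $T''_1$ shows that $H$ contains no edges between $T'_0\cup T'_1$ and $T''_0\cup T''_1$, which forces $H$ to be disconnected.

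The statement for $\SteinerS{k}{n}$ trades is obtained by rerunning the same two arguments with $n/2$ replaced by $k$, since Proposition~\ref{p:subgraph} packages both cases identically. The only potential subtlety is checking that the two sub-partitions produced in the first direction are genuinely nonempty trades rather than empty or one-sided sets; this is a routine consequence of the positive regularity of $H$, so I do not expect any real obstacle here.
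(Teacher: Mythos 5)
Your proof is correct and follows exactly the route the paper intends: the corollary is stated without proof as an immediate consequence of Proposition~\ref{p:subgraph}, and your two directions (splitting off a union of components, and using the degree count $n/2$ to show a sub-trade absorbs all neighbours) are precisely the routine verification being left to the reader, including the nonemptiness check that positive regularity forces every component to meet both parts.
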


The next lemma is convenient for representing the dual space of 
a primary trade by a basis. 
Note that, for a primary extended $1$-perfect or primary $\SteinerS{k}{n}$ trade $(T_0,T_1)$,
the union  $T_0\cup T_1$ induces a connected subgraph of $\HammingGraphalvedCube{n}$.

\begin{lemma}
Let $C$ be a vertex set of $\HammingGraphalvedCube{n}$ 
such that the induced subgraph is connected.
Then the dual space of $C$ is closed with respect to the coordinate-wise multiplication.
\end{lemma}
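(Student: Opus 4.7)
The plan is to translate membership in $C^\perp$ into a local, coordinate-wise condition induced by the edges of the subgraph of $\HammingGraphalvedCube{n}$ on $C$, and then observe that this condition is preserved by coordinate-wise multiplication. By the definition of the dual space, each $x\in C^\perp$ satisfies $\sum_k x_k c_k\equiv \varepsilon_x\pmod{2}$ for every $c\in C$ and some fixed $\varepsilon_x\in\{0,1\}$.

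The key local observation is the following. If $c,c'\in C$ are adjacent in $\HammingGraphalvedCube{n}$, then they differ in exactly two coordinates, say $i$ and $j$. Subtracting the two congruences for $c$ and $c'$, the contributions of the remaining coordinates cancel, and we are left with $x_i+x_j\equiv 0\pmod{2}$, i.e.\ $x_i=x_j$. Hence every edge of the induced subgraph forces the corresponding pair of coordinates to agree under every word of $C^\perp$.

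Given this, closure of $C^\perp$ under coordinate-wise multiplication is almost immediate. Let $x,y\in C^\perp$ and denote by $z=(x_0y_0,\ldots,x_{n-1}y_{n-1})$ their coordinate-wise product. For any edge $c,c'$ of the induced subgraph with changed coordinates $i,j$, the relations $x_i=x_j$ and $y_i=y_j$ imply $z_i=z_j$, so $\sum_k z_k c_k\equiv\sum_k z_k c'_k\pmod{2}$. Since $C$ induces a connected subgraph, any two vertices of $C$ are joined by a path of such edges, and the congruence propagates to all of $C$. Therefore $\sum_k z_kc_k$ is constant modulo $2$ over $c\in C$, i.e.\ $z\in C^\perp$.

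The only step that genuinely uses the connectedness hypothesis, and essentially the only place that requires care, is this propagation: the local edge argument only gives $\sum_k z_kc_k\equiv\sum_k z_kc'_k$ for adjacent $c,c'\in C$, and without connectedness one would at best obtain constancy of this sum on each connected component separately, which would be too weak to conclude $z\in C^\perp$. Aside from that, the argument is purely a local cancellation computation.
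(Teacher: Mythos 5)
Your proof is correct and follows essentially the same route as the paper: the same local observation that an edge of the induced subgraph with changed coordinates $i,j$ forces $x_i=x_j$ for every $x\in C^\perp$, hence $z_i=z_j$ for the coordinate-wise product, followed by propagation along paths using connectedness. Your write-up is if anything slightly more explicit than the paper's about why connectedness is needed, but there is no substantive difference.
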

\begin{proof}
 Let $x,y\in C^\perp$, and let $c$ and $d$ be codewords of $C$
 differing in exactly two coordinates, $i$ and $j$. 
 Denote by $z$ the coordinate-wise multiplication of $x$ and $y$.
 Since $x$ is orthogonal 
 or antiorthogonal to $\{c,d\}$,
 we have $x_i = x_j$. 
 Similarly, $y_i=y_j$. 
 It follows that
 $z_i=z_j$, 
 and we see that $z$ 
 is orthogonal 
 or antiorthogonal to $\{c,d\}$.
 From the connectivity, we get that $z$ 
 is orthogonal 
 or antiorthogonal to $C$. 
 \end{proof}
Any space closed with respect to the coordinate-wise multiplication
can be represented by the \emph{standard} basis whose elements have
mutually disjoint sets of non-zero coordinates.

\section{Extended $1$-perfect trades in \HammingGraphalvedCube{8}}\label{s:8}

As noted in Section~\ref{ss:cf}, extended $1$-perfect trades in \HammingGraphalvedCube{n} exist only if $n$ is even.
An example of a trade can be constructed recursively, starting with the trivial trade $(\{00\},\{11\})$
and applying the construction in Lemma~\ref{p:extend}.
Before we start our classification for $n=8$,
we note that this is the first case when nonequivalent extended $1$-perfect trades exist.
Indeed, the case $n=2$ is trivial,
In the case $n=4$, 
by Corollary~\ref{c:antipod}, 
every trade mate includes a self-complementary pair of vertices; 
since every maximum clique intersects with such pair, the volume is $2$.
If $(T_0,T_1)$ is an extended $1$-perfect trade in \HammingGraphalvedCube{6},
then we can assume without loss of generality that $000000\in T_0$ and,
in accordance with Proposition~\ref{p:subgraph}, $000011$, $001100$, $110000 \in T_1$;
then, by Corollary~\ref{c:antipod}, $T_0=\{000000,111100,110011,001111\}= T_1+1^6$.

Now consider three extended $1$-perfect codes of length $8$,
$C_0=\langle 00001111, 00110011, 01010101, 11111111 \rangle$,
$C_1=\langle 10000111, 00110011, 01010101, 11111111 \rangle$,
$C_2=\langle 00001111, 00110101, 01010110, 11111111 \rangle$,
where $\langle \dots \rangle$ denotes the linear span over the finite field of order $2$. 
It is not difficult to check that 
$(C_0\backslash C_1,C_1\backslash C_0)$,
$(C_0\backslash C_2,C_2\backslash C_0)$, and
$(C_1\backslash C_2,C_2\backslash C_1)$ are constant-weight
extended $1$-perfect trades of volume $8$, $12$, and $14$, respectively.
As we see from the following theorem, all nonequivalent primary extended $1$-perfect trades
are exhausted by these three constant-weight trades and two trades of volume $16$ (each
consisting of two extended $1$-perfect codes).

\begin{theorem}\label{th:8}
There are only $5$ nonequivalent primary extended $1$-perfect trades in \HammingGraphalvedCube{8},
of volume $8$, $12$, $14$, $16$ and $16$, respectively.
\end{theorem}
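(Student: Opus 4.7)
The plan is to recast the classification as a graph-theoretic enumeration and then carry it out by a structured search. By Proposition~\ref{p:subgraph}, a pair $(T_0,T_1)$ is an extended $1$-perfect trade in \HammingGraphalvedCube{8} exactly when the subgraph $\Gamma$ of \HammingGraphalvedCube{8} induced on $T_0\cup T_1$ is bipartite with parts $T_0$, $T_1$ and regular of degree $4$. Corollary~\ref{c:prime} says that primary corresponds to $\Gamma$ being connected, and Corollary~\ref{c:antipod}(i) forces each part to be antipodally closed (invariant under $x\mapsto x+1^8$). So the task becomes: enumerate, up to $\mathrm{Aut}(\HammingGraphalvedCube{8})$, the connected $4$-regular bipartite subgraphs of \HammingGraphalvedCube{8} both of whose parts are unions of antipodal pairs.

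The search I would carry out proceeds by canonical augmentation. Using vertex-transitivity of $\mathrm{Aut}(\HammingGraphalvedCube{8})$, fix $0^8\in T_0$; by antipodal closure $1^8\in T_0$ as well. The four neighbors of $0^8$ in $\Gamma$ are a $4$-subset $N$ of the $28$ weight-$2$ words, and up to the stabilizer $\mathrm{Sym}_8$ of $\{0^8,1^8\}$ there are only a small number of orbits of such $N$ (essentially parameterized by the multiset of coordinate multiplicities that appear in $N$, with total $8$). For each orbit representative, extend the partial trade step by step: put $N$ and its antipode into $T_1$, then for every vertex currently lacking degree $4$, enumerate all admissible ways to add new neighbors (each addition forces its antipode into the same part and creates new degree constraints). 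Backtrack whenever some vertex is forced to exceed degree $4$ or to lie in both parts, and prune by a canonical-form test (nauty) on the colored graph $(T_0,T_1,\Gamma)$ to avoid exploring equivalent branches.

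Termination is guaranteed because the volume is bounded: $|T_0|=|T_1|\le 64$, and in fact the upper bound is quickly tightened by noting that once $T_0$ contains an extended $1$-perfect code (size $16$), both antipodal-closed and $4$-regular constraints leave no room to continue. Each completed instance is stored with a canonical label; two trades are equivalent iff their canonical labels agree up to a possible swap $T_0\leftrightarrow T_1$. The expected output consists of the three constant-weight examples exhibited just before the theorem (volumes $8$, $12$, $14$, obtained as symmetric differences of the codes $C_0,C_1,C_2$) together with the two pairs of disjoint extended $1$-perfect codes of length $8$, both of volume $16$.

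The main obstacle is not conceptual but combinatorial: keeping the branching tractable. The two structural levers that make this feasible are antipodal closure, which effectively halves the vertex set the search has to reason about, and canonical-form rejection at every node of the search tree, which prevents the exponential duplication that an unchecked backtrack would produce. Without both of these, the enumeration of $4$-regular bipartite subgraphs on up to $32$ vertices of a $128$-vertex host graph would be unmanageable; with them, the search completes and yields exactly the five orbits claimed.
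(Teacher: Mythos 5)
Your strategy is, in substance, the same as the paper's: fix $0^8\in T_0$, use Corollary~\ref{c:antipod}(i) to force antipodal closure of each part, use Proposition~\ref{p:subgraph} to reduce the problem to finding connected $4$-regular bipartite induced subgraphs, and run a backtracking search with symmetry reduction. The paper executes exactly this search by hand: it observes that the four $T_1$-neighbors of $0^8$ must be weight-$2$ words with pairwise disjoint supports (a single orbit under $\mathrm{Sym}_8$, so they may be fixed outright), then reduces the choice of the remaining three $T_0$-neighbors of $v_1=11000000$ to three cases (a), (b), (c) up to the residual stabilizer, and follows each branch to completion, finally separating the two volume-$16$ solutions by computing dual spaces. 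So there is no difference of method, only of who performs the enumeration.

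The genuine gap is that your proposal never performs the enumeration: for a classification theorem the case analysis \emph{is} the proof, and writing ``the expected output consists of\dots the five orbits claimed'' asserts the conclusion rather than deriving it. If you intend the computer run itself to be the proof, you still owe a validation argument (the paper uses orbit--stabilizer double counting for $n=10,12$ in Section~\ref{ss:valid}); if you intend a human-checkable proof, you must exhibit the branches. Two smaller points to repair: (1) your first-level reduction understates the constraint --- the four neighbors of $0^8$ all lie in $T_1$, which is independent in the induced subgraph, so they are four weight-$2$ words at pairwise Hamming distance $4$, i.e., with disjoint supports; this is a single $\mathrm{Sym}_8$-orbit, not ``a small number of orbits parameterized by coordinate multiplicities.'' (2) Your volume bound is circular as phrased; the correct argument is that $T_0$ is an independent set of even-weight words in $\HammingGraphalvedCube{8}$, hence a length-$8$ distance-$4$ code, hence $|T_0|\le 16$, which is what bounds the depth of the search.
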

\begin{proof}
Step 1.  Without loss of generality, we can assume that 
$000000000 \in T_0$ and $v_1=11000000$, $v_2=00110000$, $v_3=00001100$, $v_4=00000011 \in T_1$.
By Corollary~\ref{c:antipod}, we also have
$1^8 \in T_0$ and $v_1+1^8$, $v_2+1^8$, $v_3+1^8$, $v_4+1^8 \in T_1$.

Step 2.
Now consider the word $v_1=11000000$ from $T_1$. 
We know one of its neighbors in $T_0$, $0^8$. 
There are $15$ ways to choose the set $\{x,y,z\}$ of three words adjacent to $V_1$ and not adjacent mutually and with $0^8$
(indeed, $v_1+0^8$, $v_1+x$, $v_1+y$, $v_1+z$ must be weight-$2$ words with mutually disjoint sets of ones).
Without loss of generality, it suffices to consider only three of them (each of the other cases can be obtained from these three
by applying one of $4!\cdot 2^4$ coordinate permutations stabilizing the collection of words chosen at Step 1):

a) $x=11110000$, $y=11001100$, $z=11000011$,

b) $x=11110000$, $y=11001010$, $z=11000101$,

c) $x=11101000$, $y=11010010$, $z=11000101$.

In each of the cases, $x+1^8$, $y+1^8$, and $z+1^8$ also belong to $T_0$.

(a) In this case, no more words can be added to $T_1$ or $T_2$ 
as the subgraph generated by the set of chosen $16$ vertices satisfies the condition 
of Proposition~\ref{p:subgraph}.

(b) We already know all four $T_0$-neighbors of $v_2=00110000$: $0^8$, $x$, $y+1^8$, $z+1^8$. 
Consider $v_3=00001100$. We know two its neighbors from $T_0$: $0^8$ and $x=00001111$.
The words $11001100$ and $00111100$ are adjacent to $y$ and $y+1^8$, respectively,
and hence cannot belong to $T_0$. Consequently, either $10101100$, $01011100\in T_0$,
or $10011100$, $01011100\in T_0$. Without loss of generality we consider the former case 
(all previously chosen words have coinciding values in the first two coordinates).
Then, $01010011$ and $10100011$ are also in $T_0$ and in the neighborhood of $v_4$.

Step 3 (case (b)).
For the word $x$ from $T_0$, we know its four neighbors from $T_1$: $v_1$, $v_2$, $v_3+1^8$, and $v_4+1^8$.
Consider $y=11001010\in T_0$. We have $11000000$, $11001111\in T_1$; 
the other two neighbors of $y$ from $T_1$ are

(i) $10101010$ and $01011010$ or

(ii) $10011010$ and $01101010$

(note that the third subcase, $00001010$ and $11111010$, is not feasible).

In the first subcase, including also the complements, we get
\begin{eqnarray*}
T_0 &\supseteq &\{ 00000000, 11111111, 11110000, 00001111, 11001010, 00110101,
\\&& \phantom{\{} 
11000101, 00111010, 10101100, 01010011, 01011100, 10100011\} ,
\\
T_1 &\supseteq& \{ 11000000, 00110000, 00001100, 00000011, 10101010, 01010101,
\\&& \phantom{\{} 
01011010, 10100101, 00111111, 11001111, 11110011, 11111100 \}.
\end{eqnarray*}
We see that the words chosen already form a trade; 
so,  $T_0$ and $T_1$ do not contain more vertices.

In the second subcase, including also the complements, we get
\begin{eqnarray*}
T_0 &\supseteq &\{ 00000000, 11111111, 11110000, 00001111, 
11001010, 00110101, 11000101, 00111010, \\&& \phantom{\{}
\underline{10101100}, \underline{01010011}, \underline{01011100}, \underline{10100011}\} ,
\\
T_1 &\supseteq& \{ 11000000, 00110000, 00001100, 00000011, 00111111, 11001111, 11110011, 11111100, \\&&
\phantom{\{} \underline{10011010}, \underline{01100101}, \underline{01101010}, \underline{10010101} \}.
\end{eqnarray*}
Consider $z={10101100}$. 
We know $00001100$, $11111100\in T_1$. 
The other two neighbors of $z$ in $T_1$ can be
$10100000$, $10101111$ (this way is not feasible as $11000000\in T_1$),
$10101010$, $10100101$ (not feasible as $10011010 \in T_1$),
or $10101001$, $10100110$, the only feasible way.

Similarly, considering $10011010\in T_1$, we find that
$10011001$, $10010110\in T_0$. Including also the complements, we have
two extended $1$-perfect codes:
\begin{eqnarray}
T_0&=&  \{ 00000000, 11111111, 11110000, 00001111, 
11001010, 00110101, 11000101, 00111010, \nonumber\\ &&
\phantom{\{}
{10101100}, {01010011}, {01011100}, {10100011}, 
10011001, 10010110, 01100110, 01101001\},  \nonumber
\\
T_1 &=& \{ 11000000, 00110000, 00001100, 00000011, 
00111111, 11001111, 11110011, 11111100, \label{eq:pp1}\\ &&
\phantom{\{}
{10011010}, {01100101}, {01101010}, 
{10010101},10101001, 10100110, 01010110, 01011001\nonumber\}.
\end{eqnarray}

(c) $00000000$, $11111111$,  $11101000$, $11010010$, $11000101$, $00010111$, $00101101$, $00111010\in T_0$.

Consider the word $v_2=00110000\in T_1$ and its possible neighbors from $T_0$.
We know $00000000$, $00111010\in T_0$. 
We see that
$11110000\not\in T_0$ (because $11101000\in T_0$).
The remaining words are
$10110100$, $01110100$, $10110001$, $01110001$.
Without loss of generality, 
$10110100$, $01110001\in T_0$.
The complements $01001011$ and $10001110$ are also in $T_0$.

Next, consider the neighborhood of $v_3=00001100\in T_1$.
We know $00000000$, $00101101$, $10001110\in T_0$; so,
we deduce that $01011100\in T_0$.
The complement $10100011$ is the fourth neighbor of $v_4$ in $T_0$.

Now we know that
\begin{eqnarray*}
T_0&\supseteq& \{ 00000000, 11111111,  11101000, 11010010, 11000101, 00010111, 00101101, 
\\&& \phantom{\{} 
00111010, 10110100, 01110001, 01001011, 10001110,
01011100, 10100011\}.
\end{eqnarray*}

Step 3 (case (c)). Consider the neighborhood of $x=11101000$. It contains $v_1$ and $v_4+1^8$ from $T_1$. 
There are two ways to choose the other two trade neighbors of $x$.

(i) $10101010$ and $01101001$ (and their complements $01010101$ and $10010110$) are in $T_1$.

Then, $y=11010010$ is adjacent to $v_1$, $v_3+1^8$, and $10010110$ from $T_1$. So, the fourth trade neighbor is $01011010$; the complement is $10100101$.
At this stage, we have 
\begin{eqnarray*}
T_1&\supseteq& \{ 11000000, 00110000, 00001100, 
00000011, 00111111, 11001111, 11110011,  
\\&& \phantom{\{} 
11111100, 10101010, 01101001, 
01010101, 10010110, 01011010, 10100101 \}
\end{eqnarray*}
and no more words can be added to $T_0$ or $T_1$. Then, $(T_0,T_1)$ is a trade of volume $14$.

(ii) $01101010$ and $10101001$ (and their complements $10010101$ and $01010110$)  are in $T_1$.

Then, $y=11010010$ is adjacent to $v_1$, $v_3+1^8$, and $01010110$ from $T_1$.  
The fourth $T_1$-neighbor of $y$ must be $10010110$; the complement is $01101001$.
Next, $10110100$ from $T_0$ has neighbors $v_2$, $v_4+1^8$, and $10010101$ from $T_1$.
The fourth neighbor from $T_1$ must be $10100110$; the complement is $01011001$.
Now, we know $16$ words of $T_1$. Since $16$ is the cardinality of an extended $1$-perfect code,
no more words can be added. Two more words should be found in $T_0$; 
it is not difficult to see that the only way
is $01100110$, $10011001$. 
We have two disjoint extended $1$-perfect codes:
\begin{eqnarray}
 T_0 &=& \{
 00000000, 11111111, 11101000, 11010010, 
 11000101, 00010111, 00101101, 00111010, \nonumber \\
&& \phantom{\{}
 10110100, 01110001, 01001011, 10001110,
 01011100, 10100011, 01100110, 10011001 \nonumber \},
\\
T_1&=&\{
 11000000, 00110000, 00001100, 00000011, 
 00111111, 11001111, 11110011, 11111100, \label{eq:pp2} \\
&& \phantom{\{}
 01101010, 10101001, 10010101, 01010110, 
 10011010, 01100101,  10100110, 01011001 \nonumber \}.
\end{eqnarray}

It remains to show that the solutions (\ref{eq:pp1}) and (\ref{eq:pp2}) are nonequivalent.
We count the number of words orthogonal to $T_0 \cup T_1$ 
(it is easy to see that this number is invariant among equivalent codes).
The unique, up to equivalence, extended $1$-perfect code containing $0^8$ is self-dual, 
that is, the set of all orthogonal words is the code itself.
So, every orthogonal word belongs to $T_0$; 
for each word of $T_0$, it is easy to check if it is orthogonal to $T_1$.
As a result, for (\ref{eq:pp1}) and (\ref{eq:pp2}), the dual spaces are $\{0^n, 1^n, 00001111, 11110000\}$
and $\{0^n, 1^n\}$ respectively, which certifies that the corresponding trades are nonequivalent.
\end{proof}

\section{Extended $1$-perfect trades in \HammingGraphalvedCube{10}}\label{s:10}

In this section, we describe a computer-aided classification 
of extended $1$-perfect trades of length $10$.
The computation took a few seconds on a modern PC.

\subsection{Algorithm}\label{aa:alg}
The algorithm described below is similar to the one used in the proof of Theorem~\ref{th:8}.
We omit the details concerning some natural improvement and show only the general approach.
Essentially, it is the breadth-first search of a bipartite $5$-regular induced subgraph of \HammingGraph{10}
that takes into account the complementarity.
Below, we consider $T_0$ and $T_1$ as lists of words, 
whose contents change during the run of the algorithm.

At step 1, we assume that 
$T_0$ contains $0^{10}$ and  
$T_1$ contains $v_1=1100000000$,  
$v_2=0011000000$,
$v_3=0000110000$,
$v_4=0000001100$, and
$v_5=0000000011$; utilizing Corollary~\ref{c:antipod}, we add $1^{10}$ to $T_1$ and $v_i+1^{10}$ to $T_0$, $i=1,\ldots,5$.
Since any trade is equivalent to one with these words, these twelve words will not be changed during the search.

At step 2, for $i$ from $1$, \ldots, $5$, we choose  
the lexicographically first collection of $5$ mutually non-adjacent words 
in the neighborhood of $v_i$ that are not adjacent to any word of $T_0$.
This implies that any word of $T_0$ (say, $0^{10}$) adjacent to $v_i$ is automatically chosen.
The other chosen words are ``new'', and we include them to $T_0$, and their complements to $T_1$.
If, for some $i$, there is no such collection of $5$ words, we return to $v_{i-1}$ 
and choose the next lexicographical alternative for it (if there is no such alternative, return to $v_{i-2}$, and so on).
When the $5$ neighbors are chosen for all $v_i$, $i=1,\ldots,5$, we come to the next step.

At step 3, for each  word of $T_0$ added at the previous step, we find $5$ mutually 
non-adjacent neighbors that are not adjacent to any word of $T_1$,
 add the chosen words that are new to $T_1$, 
 and add the complements to $T_0$. 
Again, after considering all possibilities for a given vertex, 
we roll back to the previous vertex,
which is at this or the previous step,
and choose the next alternative for it.

Similarly, step 4, step 5, and so on.

If, at some step, we find that each included words in $T_0$ ($T_1$) already has $5$
neighbors in $T_0$ (respectively, $T_1$), then we have found a trade. 
We add it to the list of found solutions and continue the search.

We finish this section by the pseudocode of the algorithm.

\mbox{}\\
\verb|define RECURSION(|$s$\verb|): #| \emph{$s$ is the step number} \\
\verb|       |$j:=s\bmod 2$ \verb| #| \emph{the parity of the step} \\
\verb|       |$i:=1-j$ \\
\verb|       if |$T_0^{+}=T_1^{+}=\{\}$\verb|: |  \\
\verb|             FOUND_SOLUTION() #| \emph{record the solution $(T_0,T_1)$, proceed isomorph rejection, \ldots} \\
\verb|       else if |$T_i^{+}=\{\}$\verb|:| \\
\verb|             RECURSION(|$s+1$\verb|) #| \emph{go to the next step} \\
\verb|       else:  | \\
\verb|             choose |$v$\verb| from |$T_i^+$ \\
\verb|             |$T_i^+:=T_i^+\backslash \{v\}$ \\ 
\verb|             for all 5-subsets |$N$\verb| of the neighborhood of |$v$ \\ 
\verb|              such that |$N\cup T_{j}$\verb| is an independent set do:| \\ 
\verb|                     |$N^{+}:=N\backslash T_{j}$\verb| #| \emph{new vertices to add} \\ 
\verb|                     |$T_{j}^{+}:=T_{j}^{+} \cup N^+$ \\ 
\verb|                     |$T_{j}:=T_{j} \cup N^+$\\ 
\verb|                     |$T_{i}:=T_{i} \cup (N^+ + 1111111111)$ \\ 
\verb|                     RECURSION(|$s$\verb|) | \\ 
\verb|                     |$T_{j}^{+}:=T_{j}^{+} \backslash N^+$ \\ 
\verb|                     |$T_{j}:=T_{j} \backslash N^+$ \\ 
\verb|                     |$T_{i}:=T_{i} \backslash (N^+ + 1111111111)$ \\ 
\verb|             |$T_i^+:=\{v\} \cup T_i^+ $ \\ 
\verb|#| \emph{now, the main part of the algorithm}
\\
$T_0 := \{0000000000, 1111111100,1111110011,1111001111,1100111111,0011111111\}$
\\
$T_1 := \{1111111111, 0000000011,0000001100,0000110000,0011000000,1100000000\}$
\\
$T_0^{+} : = \{ \}$ \verb|#|\ \  \emph{$T_0^{+}$ and $T_1^{+}$ keep the chosen vertices with the ``unsolved'' neighborhood}
\\
$T_1^{+} := \{0000000011,0000001100,0000110000,0011000000,1100000000\}$
\\
\verb|RECURSION(2)|

\subsection{Results}\label{ss:res}
There are $8$ nonequivalent extended $1$-perfect trades in \HammingGraphalvedCube{10}.
Below, for each of them, we list the component $T_0$, 
while $T_1$ is obtained by taking the complement for each word of $T_0$.
For briefness, $T_0$ is represented in the form $T_0=K+R=\{a+b\,:\,a\in K,b\in R\}$, 
where $K$ (the \emph{kernel} of $T_0$) 
is the maximal linear subspace admitting such decomposition of $T_0$.

T16:
%
$\langle 0000001111,0000110011,0011000011,1100000011 \rangle+0101010101$.

T24:
$\langle 1111\,0000\,00,0000\,1111\,00 \rangle+\{
 0011\,0011\,01$, $0101\,0101\,01$, $0110\,0110\,01$,
$0011\,0101\,10$, $0101\,0110\,10$, $0110\,0011\,10\}$.


T28:
$\langle 11111111\,00 \rangle+\{
 00010111\,01$,
$00101110\,01$,
$01011100\,01$,
$00111001\,01$,
$01110010\,01$,
$01100101\,01$,
$01001011\,01$,
$00011101\,10$,
$00111010\,10$,
$01110100\,10$,
$01101001\,10$,
$01010011\,10$,
$00100111\,10$,
$01001110\,10\}$,

T32a:
$\langle 1111000000,0000111100,0110011000 \rangle +
\{0000 000000$,
$0000 001111$,
$0011 000011$,
$0011 010100\}$.

T32b:
$\langle 1111111100,0110011000 \rangle +
\{0000000000$,
$0000001111$,
$0000110011$,
$0001011100$,
$0010110100$,
$0011000011$,
$0011101000$,
$0011111111\}$.

T32c:
$\langle 0000111100 \rangle+
\{0000 000000$,
  $0000 001111$,
  $0001 011001$,
  $0011 000011$,
  $0011 010100$,
  $0101 001100$,
  $0101 010111$,
  $0111 011010$,
  $1000 010101$,
  $1010 001100$,
  $1010 011011$,
  $1100 000011$,
  $1100 011000$,
  $1110 010110$,
  $1111 000000$,
  $1111 001111\}$.

T36:
$\{0\,111\,100\,001$,
$  0\,111\,001\,010$,
$  0\,111\,010\,100$,
$  0\,100\,001\,111$,
$  0\,001\,010\,111$,
$  0\,010\,100\,111$,
$  0\,001\,111\,100$,
$  0\,010\,111\,001$,
$  0\,100\,111\,010$,
$  0\,101\,100\,110$,
$  0\,011\,001\,101$,
$  0\,110\,010\,011$,
$  0\,100\,110\,101$,
$  0\,001\,101\,011$,
$  0\,010\,011\,110$,
$  0\,110\,101\,100$,
$  0\,101\,011\,001$,
$  0\,011\,110\,010$,
$  1\,000\,110\,011$,
$  1\,000\,101\,110$,
$  1\,000\,011\,101$,
$  1\,011\,000\,110$,
$  1\,110\,000\,101$,
$  1\,101\,000\,011$,
$  1\,110\,011\,000$,
$  1\,101\,110\,000$,
$  1\,011\,101\,000$,
$  1\,010\,001\,011$,
$  1\,100\,010\,110$,
$  1\,001\,100\,101$,
$  1\,011\,010\,001$,
$  1\,110\,100\,010$,
$  1\,101\,001\,100$,
$  1\,001\,011\,010$,
$  1\,010\,110\,100$,
$  1\,100\,101\,001\}$.

T40:
$\{0000000000$,
$0000001111$,
$0000110101$,
$0001010011$,
$0001011100$,
$0001101010$,
$0010011010$,
$0010100110$,
$0010101001$,
$0011000101$,
$0011110000$,
$0011111111$,
$0100100011$,
$0100111000$,
$0101101101$,
$0101110110$,
$0110001100$,
$0110010111$,
$0111000010$,
$0111011001$,
$1000101100$,
$1000110010$,
$1001100111$,
$1001111001$,
$1010000011$,
$1010011101$,
$1011001000$,
$1011010110$,
$1100000110$,
$1100001001$,
$1100111111$,
$1101010101$,
$1101011010$,
$1101100000$,
$1110010000$,
$1110100101$,
$1110101010$,
$1111001111$,
$1111110011$,
$1111111100\}$.

The trades T16, T24, T28, and T36 are constant-weight; 
the others, T32a, T32b, T32c, and T40, cannot be represented as constant-weight. 
Each mate of the trade T40 is an optimal distance-$4$ code 
equivalent to the Best code \cite{Best80}.

Table~\ref{t:1} reflects some properties of the listed trades.
\begin{table}
\centering
\begin{tabular}{@{}l|c|c|l|l}
Name& Volume & Rank & $|\mathrm{Aut}|$ &Coordinate orbits
\\\hline\hline
T16 &16& 4+1 & $2\cdot 16\cdot 3840$ &  \{0,1,2,3,4,5,6,7,8,9\}
\\\hline
T24 &24& 7+0 & $2\cdot 24\cdot 64$ & \{0,1\}, \{2,3,4,5,6,7,8,9\}
\\\hline
T28 &28& 8+0 & $2\cdot 28\cdot 48$ & \{0,1\}, \{2,3,4,5,6,7,8,9\}
\\\hline
T32a &32 &6+1 & $2\cdot 32\cdot 64$ & \{0,1\}, \{2,3,4,5,6,7,8,9\}
\\\hline
T32b &32& 7+1 & $2\cdot 32\cdot 48$ & \{0,1,6,9\}, \{2,3,4,5\}, \{7,8\}
\\\hline
T32c &32& 8+0 & $2\cdot 16\cdot 8$ & \{0,2,4,5\}, \{1,3\}, \{6,7,8,9\}
\\\hline
T36 &36& 9+0 & $2\cdot 36\cdot 40$ & \{0,1,2,3,4,5,6,7,8,9\}
\\\hline
T40 &40& 9+0 & $2\cdot 40\cdot 8$ & \{0,1,2,3,4,5,6,7,8,9\}
\end{tabular}
\caption{Extended $1$-perfect trades in the $10$-cube}
\label{t:1}
\end{table}
For each trade $(T_0,T_1)$, 
the second column contains its volume.
The column ``rank'' contains the affine rank of $T_0\cup T_1$,
where the first summand is the rank of $T_0$ 
(as follows from Corollary~\ref{c:antipod}, the second summand is either $0$ or $1$ for length $10\equiv 2\bmod 4$).

The column $|\mathrm{Aut}|$ contains the order of the automorphism group of $T_0\cup T_1$.
In this column, the last factor is the order of the 
stabilizer of a vertex of $T_0\cup T_1$ under  $\mathrm{Aut}(T_0\cup T_1)$;
and the last two factors correspond to 
$|\mathrm{Aut}(T_0)|$ (it can be seen from Corollary~\ref{c:antipod} that $|\mathrm{Aut}(T_0\cup T_1)|=2\cdot|\mathrm{Aut}(T_0)|$).
In seven (all but one) cases, the second factor coincides with $|T_0|$. 
For these seven trades, $T_0$ forms one orbit under the action $\mathrm{Aut}(T_0)$; i.e., $\mathrm{Aut}(T_0)$ acts transitively on $T_0$.
For one trade, T32c, $T_0$ is divided into two orbits of size $16$.

The last column contains the 
coordinate orbits under the action of $\mathrm{Aut}(T_0\cup T_1)$. 
In particular, the number of orbits is the number of nonequivalent $1$-perfect trades in \HammingGraph{9} obtained
by puncturing (deleting the same coordinate in all words) from a given extended $1$-perfect trade in \HammingGraphalvedCube{10}.
We see that the total number of nonequivalent primary $1$-perfect trades in \HammingGraph{9} is $15 = 1+2+2+2+3+3+1+1$.

In Table~\ref{t:2}, we list all STS trades derived from extended $1$-perfect trades of length $10$.
In  \cite[Table~3.4]{FGG:2004:trades}, the authors list all nonequivalent STS trades of volume at most $9$.
As the result of the current search, we can say that the STS trades number 
1 (of volume $4$), 2, 4 (of volume $6$), 5 (a pair of STS of volume $7$), 7, 11--16 (of volume $8$) are derived,
6 (of volume $7$) and 10 (of volume $8$) are not derived 
(numbers 3, 8, 9 in \cite[Table~3.4]{FGG:2004:trades} are for $3$- and $4$-way trades),
all STS trades of volume $9$ are not derived from extended $1$-perfect trades of length $10$.
The number in a cell of the table indicates how many times the STS trade corresponding to the row
occurs in the extended $1$-perfect trade $(T_0,T_1)$ corresponding to the column 
(i.e. the number of $x$ such that the weight-$3$ words of $T_0+x$, $T_1+x$ form the corresponding STS trade).
Note that derived STS trades are not necessarily primary.
\begin{table}
\centering
\begin{tabular}{@{\,}r|l|@{\,}c@{\,}|@{\,}c@{\,}|@{\,}c@{\,}|@{\,}c@{\,}|@{\,}c@{\,}|@{\,}c@{\,}|@{\,}c@{\,}|@{\,}c@{\,}|@{\,}}
No &blocks & \rotatebox{90}{T16} & \rotatebox{90}{T24} & \rotatebox{90}{T28} & \rotatebox{90}{T32a} & \rotatebox{90}{T32b} & \rotatebox{90}{T32c} & 
\rotatebox{90}{T36} & \rotatebox{90}{T40}
\\ \hline 1 &
012, 034, 135, 245 &&&&&&&&\\ & 013, 024, 125, 345
&320& 48& 0& 0& 0& 16&  0& 0\\ \hline 2 &
012, 034, 135, 146, 236, 245 &&&&&&&&\\ &
013, 024, 126, 145, 235, 346
&0& 64& 0& 0& 0& 0&  0& 0\\ \hline 4 &
012, 034, 135, 246, 257, 367 &&&&&&&&\\ &
013, 024, 125, 267, 346, 357
&0& 128& 112& 0& 0& 0&  0& 0\\ \hline 5 &
012, 034, 056, 135, 146, 236, 245 &&&&&&&&\\ &
013, 025, 046, 126, 145, 234, 356
&0& 0& 32& 0& 0& 0&  0& 0\\ \hline 7 &
012, 034, 067, 135, 147, 236, 257, 456 &&&&&&&&\\ &
013, 026, 047, 127, 145, 235, 346, 567
&0& 0& 0& 0& 0& 0&  90& 0\\ \hline 11 &
012, 034, 135, 146, 178, 236, 247, 258 &&&&&&&&\\ &
013, 024, 126, 147, 158, 235, 278, 346
&0& 0& 0& 0& 0& 64&  0& 0\\ \hline 12 &
012, 034, 135, 147, 236, 258, 378, 468 &&&&&&&&\\ &
014, 023, 125, 137, 268, 346, 358, 478
&0& 0& 0& 0& 0& 32&  0& 0\\ \hline 13 &
012, 034, 135, 146, 178, 236, 379, 589 &&&&&&&&\\ &
014, 023, 126, 137, 158, 346, 359, 789
&0& 0& 0& 0& 0& 16&  0& 0\\ \hline 14 &
012, 034, 067, 089, 135, 245, 568, 579 &&&&&&&&\\ &
013, 024, 068, 079, 125, 345, 567, 589
&0& 24& 0& 64& 0& 0&  0& 0\\ \hline 15 &
012, 034, 135, 246, 257, 289, 368, 379 &&&&&&&&\\ &
013, 024, 125, 268, 279, 346, 357, 389
&0& 0& 84& 128& 192& 0&  0& 0\\ \hline 16 &
012, 034, 135, 246, 257, 368, 589, 679 &&&&&&&&\\ &
013, 024, 125, 267, 346, 358, 579, 689
&0& 0& 0& 0& 0& 32&  0& 0\\ \hline &
023, 045, 124, 135, 258, 348, 068, 079, 169, 178 &&&&&&&&\\ &
025, 034, 123, 145, 248, 358, 069, 078, 168, 179
&0& 0& 0& 0& 0& 32&  0& 0\\ \hline &
035, 079, 048, 127, 145, 168, 269, 258, 349, 367, &&&&&&&&\\ &
037, 058, 049, 125, 148, 167, 279, 268, 369, 345, 
&0& 0& 0& 0& 0& 0&  0& 32\\ \hline &
017, 029, 038, 128, 145, 139, 235, 367, 468, 479, 578, 569 &&&&&&&&\\ &
019, 037, 028, 125, 147, 138, 239, 356, 458, 579, 678, 469
&0& 0& 0& 0& 0& 0&  0& 80\\ \hline &
017, 028, 039, 056, 129, 145, 168, 235, 247, 367, 469, 348  &&&&&&&&\\ &
018, 029, 035, 067, 127, 149, 156, 238, 245, 347, 369, 468,            
&0& 0& 0& 0& 0& 0&  60& 0\\ \hline
\end{tabular}
\caption{STS trades derived from extended $1$-perfect trades of length $10$;
the numbers in the first column are given in accordance with \cite[Table~3.4]{FGG:2004:trades}.}
\label{t:2}
\end{table}

It should be noted that if an STS trade is derived from extended $1$-perfect trades of length $n+1$,
and has at least one constantly zero coordinate, then it is derived from $1$-perfect trades of length $n$ (in our case, $n=9$). This argument is applicable to the first seven STS trades in Table~\ref{t:2}.

\subsection{Validation of classification}\label{ss:valid}
To check the results, we recount the number of solutions that should be found by the algorithm in an alternative way.
Double-counting is a standard way to validate computer-aided classifications of combinatorial objects, see \cite{KO:alg}.

Given a trade $(T_0,T_1)$, consider all graph automorphisms that send it to a solution.
For every word $t$ from $T_0\cup T_1$ and its five neighbors $t_1$, $t_2$, $t_3$, $t_4$, $t_5$ from $T_0\cup T_1$,
there is one translation $x \to x+t$ that sends $t$ to $0^{10}$ and $5!\cdot 2^5$ coordinate permutations that send 
$\{t_1, t_2, t_3, t_4, t_5\}$ to $\{v_1, v_2, v_3, v_4, v_5\}$. 
So, totally, there are $|T_0\cup T_1|\cdot 5!\cdot 2^5$ graph automorphisms that make from $(T_0,T_1)$ one of the solutions of the algorithm above.
Then, the number of different solutions equivalent to $(T_0,T_1)$ is
$$ |T_0\cup T_1|\cdot 5!\cdot 2^5 / |\mathrm{Aut}(T_0\cup T_1)|.$$
Summing this value over all found nonequivalent trades, 
we get $1817$, the exact number of different solutions found by the computer.

\section{Extended $1$-perfect trades in \HammingGraphalvedCube{12}}\label{s:12}
It is hardly possible to enumerate all primary extended $1$-perfect trades in \HammingGraphalvedCube{12} using the technique described above,
even if we reject isomorphic partial solutions at some steps of the search.
However, if we restrict the search by only the words of weight $6$, 
the number of cases becomes essentially smaller 
and exhaustive enumeration becomes possible
if we additionally apply isomorph rejection.
The idea of this technique is standard: at some stage,
we check the obtained partial solution and reject it 
if it is equivalent to a partial solution considered before.
Similarly to the length-$10$ case, we fix one element of $T_0$,
now it is $000000111111$, and six its $T_1$-neighbors, $v_1$, \ldots, $v_6$.
After some experiments, it was decided to perform isomorph rejection
after choosing the $T_0$-neighbors for $v_1$, $v_2$, $v_3$ and 
after choosing the $T_0$-neighbors for all $v_1$, \ldots, $v_6$.
The isomorph rejection reduced the total time of the algorithm run by the factor $1400$, approximately.
All calculation took an hour and a half 
using one core of a 3GHz personal computer.
The classification was validated using the same approach as for the length $10$ (Section~\ref{ss:valid}); 
however, taking into account the isomorph rejection, each solution found by the computer was counted with the multiplicity 
equal the multiplicity of the corresponding partial solution. 
The total number of solutions, taking into account the multiplicities, is $32076$, 
and it coincides with the expected number
calculated from the orders of the automorphism groups of the nonequivalent solutions.

\subsection{Description of the trades}\label{ss:descr}
The results of the classification are the following.
Up to equivalence, there are exactly $25$ constant-weight extended $1$-perfect trades in \HammingGraphalvedCube{12} of the following
volumes:
$32$, 
$48$, 
$56$, $56$, 
$68$, 
$86$, 
$72$, $72$, $72$, $72$, 
$80$, $80$, 
$92$, $92$, 
$92$, $96$, 
$96$, $98$, 
$102$, 
$108$, $108$, 
$110$, $110$, 
$120$, $120$, 
$132$.
The data for generation of the trades can be found in the table below.

The first column contains the volume of the trade, sometimes followed by a
letter, to form a unique ``name''.

Representatives of the orbits of $T_0$ are in the column ``$T_0$'' of the table.
The number in the index indicates the size of the orbit; 
sole number means that the orbit is self-complementary 
(i.e., each element is contained together with its complement); 
if the index ends with $\cdot 2$, then the complementary orbit should be 
additionally taken. 

The column marked $\mathrm{Sym}(T_0)\cap\mathrm{Sym}(T_1)$ contains generators of 
this group, and, sometimes an information about its structure. 
The grayed generators are not necessary to induce $T_0$
(i.e., all orbits are induced 
by only the subgroup generated by non-grayed elements).
In each case, the coordinates are ordered 
in such a way that the symmetry group can be represented in a convenient intuitive way
(as much as possible, from the point of view of the author).

If $\mathrm{Sym}(T_0 \cup T_1)=\mathrm{Sym}(T_0)\cap\mathrm{Sym}(T_1)$, 
then representatives of the orbits of $T_1$ are listed in the 
``$T_1$'' column. Usually this means that $T_0$ and $T_1$ are nonequivalent;
the only exception is the trade 80a, where $T_1=T_0+000000111111$.
The other case is
$|\mathrm{Sym}(T_0 \cup T_1)| = 2|\mathrm{Sym}(T_0)\cap\mathrm{Sym}(T_1)|$; then the column
``$T_1$'' contains an additional generator element.
The same column contains information how to generate $T_2$ in the case when
the trade $(T_0,T_1)$ can be expanded to a $3$-way trade $(T_0,T_1,T_2)$.

The last column of the table contains: (1) the order of the automorphism group of the trade, represented in the form 
$|\{x \mid x+T_0\cup T_1 = T_0\cup T_1\}|\cdot |\mathrm{Sym}(T_0\cup T_1)|$, 
where $|\mathrm{Sym}(T_0\cup T_1)$ is the stabilizer of $0^{12}$ in $\mathrm{Aut}(T_0\cup T_1)$
(for all considered trades, the automorphism group happens to be the product of the translation group with $\mathrm{Sym}$);
(2) the standard basis of the dual space; 
(3) the mark ``Witt'' 
if the $T_0$ is a subset of S$(5,6,12)$ (see the next subsection);
(4) the mark ``no squares'' for the unique trade whose graph has girth more than $4$, i.e., $6$.

\newcommand{\mlns}[2][l]{%
  $\begin{array}{@{}#1@{}}#2\end{array}$}
\newcommand\rbx[1]{\raisebox{-1em}{\rotatebox{90}{#1}}}

\begin{longtable}{l|c|c|c|l}
  & $T_0$ & $\mathrm{Sym}(T_0)\cap\mathrm{Sym}(T_1)$ & $T_1$ & properties
 \endhead
 \hline\hline
 \rbx{32} &
 \mlns{01\,01\,01\,01\,01\,01_{\times 32}} &
 \mlns[c]{(89)(ab) \quad (8a)(9b)\\(02468)(13579)} &
 \mlns{(ab)} &
 \mlns{|\mathrm{Aut}|=64\cdot 46080 \\
       \mbox{dual: } 0^{2i}1^20^{10-2i},\\ \ \  i=0,1,2,3,4,5 }
 \\ \hline
 \rbx{48} &
 \mlns{0101\,0011\,01\,01_{\times 48}} &
 \mlns[c]{(012)(465)\\(01)(23)\quad 
          (45)(67)\\(02)(46)(89)\\
          (89)(ab)\quad \color[rgb]{0.4,0.4,0.4}(8a)(9b)\\ 
          \color[rgb]{0.4,0.4,0.4}(04)(15)(26)(37)} &
 \mlns{(ab)} &
 \mlns{|\mathrm{Aut}|=16\cdot 1536 \\
       \mbox{dual: } 1^40^8,\ 0^41^40^4,\\\ \ 0^81^20^2,\ 0^{10}1^2}
 \\ \hline
 \rbx{56a} &
 \mlns{01000111\,01\,01_{\times 56}} &
 \mlns[c]{(0123456)\\
       (13)(24)(67)(89)\\(89)(ab) \qquad  
       \color[rgb]{0.4,0.4,0.4} (8a)(9b)} &
 \mlns{(ab)} &
 \mlns{|\mathrm{Aut}|=8 \cdot 2688 \\ 
 \mbox{dual: }\\ \ \  1^80^4,\ 0^81^20^2,\ 0^{10}1^2}
 \\ \hline
 \rbx{56b} &
 \mlns{0110\,0110\,0110_{\times 8 }\\
       0011\,0101\,0101_{\times 24}\\
       0011\,0011\,0110_{\times 24}\\} &
 \mlns[c]{(01)(23)\qquad(02)(13)\\(048)(159)(26a)(37b)\\ \color[rgb]{0.4,0.4,0.4} (04)(15)(26)(37)} &
 \mlns{(01)(45)(89)} &
 \mlns{|\mathrm{Aut}|=8 \cdot 768 \\
       \mbox{dual: }\\ \ \  1^40^8,\ 0^41^40^4,\ 0^81^4}
 \\  \hline
 \rbx{68} &
 \mlns{0000\,1111\,0110_{\times 4 }\\   
       0011\,0011\,0101_{\times 12}\\   
       0011\,1100\,0101_{\times 12}\\   
       0001\,1110\,0011_{\times 16}\\   
       0011\,0110\,0110_{\times 24}} &
 \mlns[c]{(123)(567)\\(05)(14)(26)(37)\\(89)(ab)\quad(8a)(9b)\\ \sim S_4\times C_2^2}&
 \mlns{0000\,1111\,0011_{\times 4 }\\   
       0011\,0011\,0110_{\times 12}\\   
       0011\,1100\,0011_{\times 12}\\   
       0001\,1110\,0110_{\times 16}\\   
       0011\,0110\,0101_{\times 24}} &
 \mlns{|\mathrm{Aut}|=4 \cdot 96 \\
       \mbox{dual: }1^80^4,\ 0^41^8}
 \\ \hline
 \rbx{72a} &
 \mlns{0011\,0011\,0011_{\times 72}} &
 \mlns[c]{(01)(23)\quad (012)(465)\\(048)(159)(26a)(37b)
       \\ \color[rgb]{0.4,0.4,0.4}(04)(15)(26)(37)} &
 \mlns{(01)(45)(8a)\\ T_2: (01)(45)(9a)} &
 \mlns{|\mathrm{Aut}|=8 \cdot 6912\\
       \mbox{dual: }\\ \ \ 1^40^8,\ 0^41^40^4,\ 0^81^4}
 \\ \hline
 \rbx{72b} &
 \mlns{0000111101\,01_{\times 72}} &
 \mlns[c]{(0187)(2365)\\(09)(48)(57)(ab)} &
 \mlns{(ab)} &
 \mlns{|\mathrm{Aut}|=4 \cdot 2880\\
       \mbox{dual: }1^{10}0^2,\ 0^{10}1^2\\
       \mbox{Witt} }
 \\ \hline
 \rbx{72c} &
 \mlns{0110\,1001\,0011_{\times 12}\\
       0101\,0101\,0011_{\times 12}\\
       0001\,1011\,0011_{\times 48}} &
 \mlns[c]{(01)(23)(45)(67)\\(89)(ab)\\(123)(567)(9ab)\\(04)(15)(26)(37)} &
 \mlns{(01)(45)(89)} &
 \mlns{|\mathrm{Aut}|=4 \cdot 192\\
 \mbox{dual: }1^80^4,\ 0^81^4}
 \\[7mm] \hline
 \rbx{72d} &
 \mlns{001\,101\,010\,011_{\times 36}\\
       001\,101\,011\,100_{\times 36}} &
 \mlns[c]{(012)(678)\\(12)(45)(78)(ab)\\(03)(14)(25)(69)(7a)(8b)\\\color[rgb]{0.4,0.4,0.4}(06)(17)(28)(39)(4a)(5b)} &
 \mlns{(12)(78)} &
 \mlns{|\mathrm{Aut}|=4 \cdot 144\\
 \mbox{dual: }1^60^6,\ 0^61^6}
 \\[7mm] \hline
 \rbx{80a} &
 \mlns{000111\,111000_{\times 20}\\
       000111\,010110_{\times 60}} &
 \mlns[c]{(01234)(6789a)\\(1342)(79a8)\\
       (05)(23)(6b)(89)\\\color[rgb]{0.4,0.4,0.4}(06)(17)(28)(39)(4a)(5b)
       \\\sim PGL_2(5)\times C_2} &
 \mlns{000111\,000111_{\times 20}\\
       000111\,101001_{\times 60}\\
       T_1=T_0+0^6 1^6} &
 \mlns{|\mathrm{Aut}|=4 \cdot 240 \\
 \mbox{dual: }1^60^6,\ 0^61^6}
 \\[7mm] \hline
  \rbx{80b} &
 \mlns[l]{0000\,1111\,0110_{\times 4 }\\
          0011\,0011\,0011_{\times 12}\\
          0001\,1110\,0011_{\times 16}\\
          0011\,0110\,0101_{\times 24}\\
          0011\,0101\,0110_{\times 24}} &
 \mlns[c]{(01)(23)(45)(67)\\(123)(567)\\(04)(15)(27)(36)\\(89)(ab)\quad (8a)(9b)\\\sim S_4\times C_2^2} &
 \mlns{(23)(67)(ab)} &
 \mlns{|\mathrm{Aut}|=4 \cdot 192\\
 \mbox{dual: }1^80^4,\ 0^81^4}
 \\[7mm] \hline
  \rbx{86} &
 \mlns{000\,111\,111\,000_{\times 2 }\\
       000\,111\,001\,110_{\times 12}\\
       001\,100\,011\,101_{\times 18}\\
       001\,001\,011\,011_{\times 18}\\
       001\,011\,011\,100_{\times 36} } &
 \mlns[c]{(012)(345)\\(05)(14)(23)(6b)(7a)(89)\\(06)(17)(28)(39)(4a)(5b)} &
 \mlns{000\,111\,000\,111_{\times 2 }\\
       000\,111\,011\,100_{\times 12}\\
       001\,011\,011\,010_{\times 18}\\
       001\,110\,001\,110_{\times 18}\\
       001\,001\,011\,101_{\times 18\cdot 2} } &
\mlns{|\mathrm{Aut}|=2 \cdot 36\\ \mbox{dual: }1^{12}}
 \\[7mm] \hline
 \rbx{92a} &
 \mlns{000000\,111111_{\times 2 }\\
        001111\,110000_{\times 30}\\
        000111\,100011_{\times 60} } &
 \mlns[c]{(01234)(6789a)\\(1342)(79a8)\\(05)(23)(6b)(89)\\(06)(17)(28)(39)(4a)(5b)\\\sim PGL_2(5)\times C_2} &
 \mlns{ 011111\,100000_{\times 12}\\
        000111\,000111_{\times 20}\\
        000111\,101001_{\times 60} } &
\mlns{|\mathrm{Aut}|=2 \cdot 240\\ 
      \mbox{dual: }1^{12}}
 \\[7mm] \hline
 \rbx{92b} &
 \mlns[l]{00\,00\,00\,\,11\,11\,11_{\times 2 }\\
           00\,00\,11\,\,11\,11\,00_{\times 6 }\\
           00\,01\,11\,\,00\,01\,11_{\times 12}\\
           00\,01\,01\,\,11\,01\,01_{\times 24}\\
           00\,01\,11\,\,10\,10\,01_{\times 48} } &
 \mlns[c]{(0213)(6879)\\(2435)(8a9b)\\(06)(17)(28)(39)(4a)(5b)\\ \sim S_4\times C_2} &
 \mlns{
01\,01\,01\,\,10\,10\,10_{\times 8 }\\
00\,01\,11\,\,11\,10\,00_{\times 12}\\
00\,01\,11\,\,00\,10\,11_{\times 12}\\
00\,00\,01\,\,11\,11\,01_{\times 12}\\
00\,01\,11\,\,10\,01\,01_{\times 48} } &
\mlns{|\mathrm{Aut}|=2 \cdot 48\\ 
      \mbox{dual: }1^{12}}
 \\[7mm] \hline
 \rbx{96a} &
 \mlns[l]{ 0001\,0011\,0111} &
 \mlns[c]{(0527)(1634)\\(048)(159)(26a)(37b)\\ \color[rgb]{0.4,0.4,0.4}(01)(23)(45)(67)(89)(ab)} &
 \mlns{(02)(13)} & 
\mlns{|\mathrm{Aut}|=2 \cdot 384\\ \mbox{dual: }1^{12} \\
      \mbox{Witt}}
 \\[7mm] \hline
 \rbx{96b} &
 \mlns[l]{00\,01\,11\,\,00\,10\,11_{\times 12}\\ 
           00\,00\,01\,\,11\,11\,01_{\times 12}\\ 
           00\,01\,01\,\,11\,10\,10_{\times 24}\\ 
           00\,01\,11\,\,01\,01\,10_{\times 48} } &
 \mlns[c]{(0213)(6879)\\(2435)(8a9b)\\(06)(17)(28)(39)(4a)(5b) \\ \sim S_4\times C_2 } &
 \mlns{00\,01\,11\,\,00\,01\,11_{\times 12}\\ 
       00\,00\,01\,\,11\,11\,10_{\times 12}\\ 
       00\,01\,01\,\,11\,01\,01_{\times 24}\\ 
       00\,01\,11\,\,01\,10\,10_{\times 48} } &
\mlns{|\mathrm{Aut}|=2 \cdot 48\\ 
      \mbox{dual: }1^{12}}
 \\[7mm] \hline
 \rbx{98} &
 \mlns[l]{111111\,000000_{\times 2   }\\
           001110\,100011_{\times 6   }\\
           011011\,001001_{\times 6   }\\
           010110\,001101_{\times 6\cdot 2 }\\
           011010\,010110_{\times 6\cdot 2 }\\
           001100\,111001_{\times 12  }\\
           010100\,110101_{\times 12  }\\
           001010\,110101_{\times 12  }\\
           000101\,100111_{\times 12\cdot 2}\\ } &
 \mlns[c]{(012345)(6789ab)\\(0b)(1a)(29)(38)(47)(56)\\ \sim D_6} &
 \mlns[l]{010101\,101010_{\times 2   }\\
          011100\,110001_{\times 6   }\\
          011100\,001110_{\times 6   }\\
          001011\,001011_{\times 12  }\\
          010110\,001011_{\times 12  }\\
          001011\,010110_{\times 12  }\\
          000001\,101111_{\times 12  }\\
          000101\,010111_{\times 12  }\\
          000101\,111001_{\times 12\cdot 2} } &
\mlns{|\mathrm{Aut}|=2 \cdot 12\\ 
       \mbox{dual: }1^{12}}
 \\[7mm] \hline
 \rbx{102} &
 \mlns{10\,01\,01\,01\,10\,01_{\times 12}\\
        10\,01\,11\,00\,01\,10_{\times 30}\\
        11\,11\,00\,10\,00\,10_{\times 30\cdot 2} } &
 \mlns[c]{(01234)(6789a)\\(01)(67)(35)(9b)(28)(4a)\\ \sim PSL_2(5)} &
 \mlns{ 10\,01\,01\,10\,01\,10_{\times 12}\\
        00\,01\,01\,01\,11\,01_{\times 30}\\
        01\,00\,00\,11\,10\,11_{\times 60} } &
\mlns{|\mathrm{Aut}|=2 \cdot 60\\ 
      \mbox{dual: }1^{12}}
 \\[7mm] \hline
 \rbx{108a} &
 \mlns[l]{000\,011\,011\,\,110 _{\times 54\cdot 2}} &
 \mlns[c]{(012)(345)(678)\\(012)(876)(9ab)\\(630)(258)(9ab)\\ \sim SA_2(3)
} &
 \mlns{(06)(17)(28)(9b)\\ T_2: \\(06)(17)(28)(9a)} &
 \mlns{|\mathrm{Aut}|=2 \cdot 432\\ 
      \mbox{dual: }1^{12}\\ \mbox{Witt}}
 \\[7mm] \hline
 \rbx{108b} &
 \mlns[l]{0001\,0110\,1101_{\times 72}\\
           0011\,0011\,0110_{\times 36} } &
 \mlns[c]{(01)(23)(45)(67)(89)(ab)\\(123)(567)(9ab)\\(04)(15)(26)(37)\\(08)(19)(2a)(3b)\\ \sim A_4\times S_3} &
 \mlns{(01)(45)(89)} &
\mlns{|\mathrm{Aut}|=2 \cdot 144\\ 
      \mbox{dual: }1^{12}\\ \mbox{Witt}}
 \\[7mm] \hline
 \rbx{110a} &
 \mlns[l]{000010011111_{\times 110}} &
 \mlns[c]{(0123456789a)\\(13954)(267a8)\\(0b)(1a)(25)(37)(48)(69)\\\sim PSL_2(11)} &
 \mlns{(0a)(19)(28)(37)(46)\\T_2:000010111011} &
 \mlns{|\mathrm{Aut}|=2 \cdot 1320\\ 
      \mbox{dual: }1^{12}}
 \\[7mm] \hline
 \rbx{110b} &
 \mlns[l]{000010111011_{\times 110}} &
 \mlns[c]{(0123456789a)\\(13954)(267a8)\\(0b)(1a)(25)(37)(48)(69)\\\sim PSL_2(11)} &
 \mlns{000010011111_{\times 110} \\T_2:\\000011110011_{\times 110}} &
\mlns{|\mathrm{Aut}|=2 \cdot 660\\ 
      \mbox{dual: }1^{12}\\
      \mbox{no squares}}
 \\[7mm] \hline
 \rbx{120a} &
 \mlns[l]{0011\,0101\,0101_{\times 24}\\0001\,0011\,1011_{\times 96}} &
 \mlns[c]{(0527)(1634)\\(048)(159)(26a)(37b)} &
 \mlns{(02)(46)(9b)} &
\mlns{|\mathrm{Aut}|=2 \cdot 192\\ 
      \mbox{dual: }1^{12}\\ \mbox{Witt}}
 \\[7mm] \hline
 \rbx{120b} &
 \mlns[l]{01\,11\,10\,10\,10\,00_{\times 60}\\
          00\,00\,10\,01\,11\,11_{\times 60}} &
 \mlns[c]{(02468)(13579)\\(01)(23)(48)(59)(6a)(7b)\\ \sim PSL_2(5)} &
 \mlns{(01)(23)(45)(67)(89)(ab)} &
\mlns{|\mathrm{Aut}|=2 \cdot 120\\ 
      \mbox{dual: }1^{12}\\ \mbox{Witt}}
 \\[7mm] \hline
 \rbx{132} &
 \mlns[l]{000001011111_{\times 132}} &
 \mlns[c]{(0123456789a)\\(13954)(267a8)\\(0b)(1a)(25)(37)(48)(69)\\ \sim PSL_2(11)} &
 \mlns{(0a)(19)(28)(37)(46)} &
\mlns{|\mathrm{Aut}|=2 \cdot 1320\\ 
      \mbox{dual: }1^{12}\\ \mbox{Witt}}
 \\[7mm] \hline
\end{longtable}

\subsection{Some additional results of the classification}\label{ss:res-plus}
\subsubsection{Small Witt design}
The possible differences of two small Witt designs S$(5,6,12)$ were classified in \cite{KraMes:74};
the results in this paragraph show the place for these differences in our classification, 
but do not establish new facts.
The trade of the maximum volume, $132$, consists of two S$(5,6,12)$. 
Only $7$ nonequivalent trades, with numbers
72b, 96a, 108a, 108b, 120a, 120b, 132,
can be represented as the difference pair
$(W_0\backslash W_1, W_1\backslash W_0)$ 
of two S$(5,6,12)$ $W_0$ and $W_1$.
This was established by an additional run of the algorithm
with $T_0$ being restricted by only the elements
of a fixed S$(5,6,12)$ (the search was fast enough without 
the isomorph rejection for partial solutions).
\begin{corollary}[\cite{KraMes:74}]\label{c:2witt} 
 Up to equivalence, there is only one pair 
 of disjoint S$(5,6,12)$.
\end{corollary}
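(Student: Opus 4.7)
The plan is to deduce this statement directly from the classification of constant-weight extended $1$-perfect trades in $\HammingGraphalvedCube{12}$ carried out in Section~\ref{s:12}. First I would record the elementary counting fact that an $\SteinerS{6}{12}$ has exactly $\binom{12}{5}/\binom{6}{5}=132$ blocks. Given two disjoint designs $W_0,W_1$ of type $\SteinerS{6}{12}$, identify each block with its weight-$6$ characteristic word and set $T_0:=W_0$, $T_1:=W_1$. Since $W_0\cap W_1=\emptyset$, one has $T_i=W_i\setminus W_{1-i}$, and the general fact stated just after the definition of trades (that the symmetric-difference pair of two designs with the same parameters is a trade) shows $(T_0,T_1)$ is an $\SteinerS{6}{12}$ trade of volume~$132$. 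By Proposition~\ref{p:perf-stein}, $(T_0,T_1)$ is simultaneously a constant-weight extended $1$-perfect trade in $\HammingGraphalvedCube{12}$.

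Next I would invoke the classification: inspecting the table in Section~\ref{ss:descr}, there is, up to equivalence, exactly one constant-weight extended $1$-perfect trade in $\HammingGraphalvedCube{12}$ of volume~$132$, the entry in the final row. That entry carries the mark ``Witt'', meaning $T_0$ is contained in some $\SteinerS{6}{12}$; because $|T_0|=132$ equals the block count of $\SteinerS{6}{12}$, the containment is an equality, so $T_0$ itself is an $\SteinerS{6}{12}$. The description of the trade in the table gives $T_1$ as the image of $T_0$ under the coordinate permutation $(0a)(19)(28)(37)(46)$, so $T_1$ is likewise an $\SteinerS{6}{12}$. Thus the unique volume-$132$ trade genuinely comes from a pair of disjoint $\SteinerS{6}{12}$.

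Finally I would observe that the definition of equivalence of pairs $(C_0,C_1)$ of vertex sets adopted in Section~\ref{s:def} is exactly the notion of equivalence of unordered pairs $\{W_0,W_1\}$ of Steiner systems (a graph automorphism of the $12$-cube permutes coordinates and possibly complements, and may swap the two components). Hence the uniqueness of the volume-$132$ trade up to equivalence translates immediately into the uniqueness of the pair of disjoint $\SteinerS{6}{12}$, completing the argument.

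The only potentially subtle point is verifying that \emph{every} disjoint pair $(W_0,W_1)$ of $\SteinerS{6}{12}$ produces precisely the volume-$132$ trade and not a smaller trade concealed inside both Steiner systems; but this is automatic from $W_0\cap W_1=\emptyset$ and $|W_0|=|W_1|=132$. All heavy lifting is done by the computer-aided classification, so no further obstacle arises.
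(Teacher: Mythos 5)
Your overall strategy --- pass from the pair of disjoint S$(5,6,12)$ to a constant-weight extended $1$-perfect trade of volume $132$ via Proposition~\ref{p:perf-stein} and then quote the classification of Section~\ref{s:12} --- is the same as the paper's, but there is a genuine gap at exactly the point you dismiss as ``automatic''. The $25$ trades listed in Section~\ref{ss:descr} are the \emph{primary} constant-weight trades: the search algorithm grows a connected bipartite regular induced subgraph, which by Proposition~\ref{p:subgraph} and Corollary~\ref{c:prime} is precisely a primary trade. So the table tells you that there is a unique primary trade of volume $132$; it does not tell you that there is a unique trade of volume $132$. A priori, two disjoint Witt designs could yield a non-primary trade splitting into several primary pieces (arithmetically this is consistent with the listed volumes, e.g.\ $132=32+32+68$), and such a trade would not be equivalent to the volume-$132$ entry of the table. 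Disjointness together with $|W_0|=|W_1|=132$ gives you the volume, not indecomposability.

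The paper closes this gap with a short extra argument that your proof needs: if $(W_0,W_1)$ were not primary, it would contain a sub-trade $(V_0,V_1)$ with $V_0\subset W_0$ and $0<|V_0|\le 132/2=66$; but the additional computation reported in Section~\ref{ss:res-plus} (the ``Witt'' marks) shows that the smallest primary trade whose first mate lies inside an S$(5,6,12)$ has volume $72>66$. Hence $(W_0,W_1)$ is primary, and only then does uniqueness of the volume-$132$ entry finish the proof. Your remaining observations --- that the volume-$132$ trade really consists of two disjoint S$(5,6,12)$, and that equivalence of trades matches equivalence of unordered pairs of designs --- are fine.
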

\begin{proof}
Two disjoint S$(5,6,12)$ 
systems $W_0$ and $W_1$ form a trade 
$(W_0,W_1)$.
If it is not primary, then there is 
a trade 
$(V_0,V_1)$ such that $V_i\subset W_i$, $i=0$, $1$,
and $0 < |V_0| \le |W_0|/2 = 66$,
which is not possible as the minimum trade included in an  S$(5,6,12)$
has the volume $72$. 
Since a primary trade of volume $132$ is unique,
the statement follows.
\end{proof}

\subsubsection{$3$-way trades and no more}
Four of the trades, 72a, 108a, 110a, and 110b, can be continued 
to $3$-way trades $(T_0,T_1,T_2)$. It occurs that for given
$T_0$ and $T_1$, the choice of $T_2$ is unique;
it follows
that for the considered parameters, 
no primary trades can be continued to $k$-way trades for $k>3$.
The $3$-way trades from the trades 110a and 110b are the same; 
so, there are only three 
nonequivalent $3$-way trades 
obtained by continuing primary trades with considered parameters.

\subsubsection{Derived STS trades}
$87$ nonequivalent STS trades 
are derived from these extended $1$-perfect trades.
Among the STS trades of volume at most $9$,
only numbers 13 (of volume 8), 29, 30, 31, 33 (of volume 9), 
in the classification
 \cite[Table 3.4]{FGG:2004:trades} are not derived.

 Only the trades $32$ and $132$ are \emph{STS-uniform}, 
 that is, each has only one derived STS trade, up to equivalence.
Each of the three trades 72a, 110a, 110b has only two nonequivalent derived STS trades;
 the other $20$ trades have $4$ or more.

 All STS $3$-way trades of volume at most $9$
 (numbers 3, 8, 23, 25, and 28 in \cite[Table 3.4]{FGG:2004:trades})
 are derived from the $3$ found extended $1$-perfect $3$-way trades:
 number 3 is derived from the $3$-way trade of volume $72$;
 number 8, 23, 25 are derived from the $3$-way trade of volume $108$;
 number 28 is derived from the $3$-way trades of volumes $72$ and $108$.
 
\subsubsection{Other remarks}
It should be noted that all $25$ trades found are rather symmetric.
The trade of volume $98$ has the smallest automorphism group among all found trades.
Its symmetry group is isomorphic to the dihedral group of order $12$
and acts transitively on the coordinates.
However, with the growth of length, 
it is naturally to conjecture the existence of primary trades with small automorphism groups,
even consisting of the identity and the translation by the all-one vector only.


Trades 110a, 110b, and 132 consist of orbits of the same symmetry group (three orbits of cardinality $110$,
which form a $3$-way trade, and two orbits of cardinality $132$, Steiner sextuple systems S$(5,6,12)$),
the projective special group $\mathrm{PSL}_2(11)$, with the natural action on the $12$ coordinates.
The other orbits of this group are not connected with trades.

Trade 110b is the only trade whose graph (i.e., the subgraph of \HammingGraphalvedCube{12} induced by $T_0\cup T_1$) has no cycles of length $4$ (squares).

\section{Construction of extended $1$-perfect trades}\label{s:constr}

In this section, we show how extended $1$-perfect trades or $k$-way 
trades of small length can be used to construct trades of larger length. 
In particular, if the length of a trade is not a power of two, 
it obviously cannot be embedded into a pair of extended $1$-perfect codes of the same length,
but the question if it can be embedded after lengthening by the construction below is considerably more difficult.

The construction uses latin trades, 
whose construction is not discussed here;
however, there is a simple example 
of a latin trade $(L_0,L_1)$ that can be used in the construction: 
$L_j$ consists of all words of length $m$ over the alphabet $\{0,\ldots,q-1\}$ with the sum of all coordinates being $j$ modulo $q$.
The simplest case that works is $q=2$, and the simplest extended $1$-perfect trade that can be used in the construction has length $2$: $(\{00\},\{11\})$; 
the corresponding partial case is described in Lemma~\ref{p:constr}.
The construction below is a variant of the product construction of extended $1$-perfect codes suggested in \cite{Phelps84} 
and also inherit ideas of the generalized concatenation construction for error-correcting codes from \cite{Zin1976:GCC}.
The proof is straightforward and omitted here. 
\begin{proposition}\label{p:constr}
Let $M=(M_0,M_1)$ be a latin trade in \HammingGraph{m,q}, and let for every
$i$ from $0$ to $m-1$, only the symbols $0$, \ldots, $q_i-1$, $q_i\le q$, 
occur in the $i$th position of the words of $M$.
For each $i$ from $0$ to $m-1$, 
let $T^{(i)}=(T^{(i)}_0,\ldots,T^{(i)}_{q_i-1})$ be
an extended $1$-perfect $q_i$-way trade in \HammingGraphalvedCube{n_i}. 
Then the pair $(T_0,T_1)$, where
$$ T_j = \{ (c_0 \ldots c_{m-1}) \,|\, c_i\in T^{(i)}_{b_i}, (b_0\ldots b_{m-1})\in M_j\},$$
is an extended $1$-perfect trade in \HammingGraphalvedCube{n_0+\ldots+n_{m-1}}. Moreover, 

(1) if the trade $M$ is primary and for every $i\in \{0,\ldots,m-1\}$ and every different $j$, $j'$ from $ \{0,\ldots,q_i-1\}$, 
the trade $(T^{(i)}_j,T^{(i)}_{j'})$ is primary, then the trade $(T_0,T_1)$ is primary too;

(2) for every $i\in \{0,\ldots,m-1\}$, the word 
$0^{n_0}...0^{n_{i-1}}1^{n_i}0^{n_{i+1}}...0^{n_{m-1}}$ is dual to $(T_0,T_1)$.
\end{proposition}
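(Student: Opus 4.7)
The plan is to use Proposition~\ref{p:subgraph} for the main claim: I will verify that the subgraph of $\HammingGraphalvedCube{n}$, $n=n_0+\ldots+n_{m-1}$, induced by $T_0\cup T_1$ is bipartite with parts $T_0,T_1$ and regular of degree $n/2$. Disjointness of $T_0$ and $T_1$ is immediate: if $c\in T_0\cap T_1$, then disjointness of the $T^{(i)}_j$ across $j$ (which holds since $(T^{(i)}_j,T^{(i)}_{j'})$ is a trade) forces the $M_0$- and $M_1$-witnesses to coincide, contradicting $M_0\cap M_1=\emptyset$. The vertices of $T_0\cup T_1$ have even weight because each concatenated piece $c_i\in T^{(i)}_{b_i}$ has even weight.

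For the edge analysis, take $c\in T_j$ and a neighbor $c'$ in $\HammingGraphalvedCube{n}$. Since $c,c'$ differ in exactly two coordinates, either these two coordinates lie in a common block $i$, or they lie in two different blocks. In the latter (cross-block) case, the $i$-th pieces of $c$ and $c'$ would differ in exactly one position and thus have opposite parities, so $c'_i$ cannot belong to any $T^{(i)}_\bullet$; hence cross-block edges are impossible. For a within-block edge in block $i$: the other blocks $c_k=c'_k$ ($k\ne i$) force the $M$-witnesses to agree outside coordinate $i$, so both witnesses lie in the maximum clique $B$ of $\HammingGraph{m,q}$ obtained by freeing coordinate $i$. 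The latin trade property gives $|M_0\cap B|=|M_1\cap B|=1$, which uniquely determines the $M_{1-j}$-witness and thus fixes a unique $b'_i$ with $b'_i\neq b_i$; then $c'_i$ ranges over the $n_i/2$ neighbors of $c_i$ inside $T^{(i)}_{b'_i}$, by the pair $(T^{(i)}_{b_i},T^{(i)}_{b'_i})$ being a trade and Proposition~\ref{p:subgraph}. This shows $c$ has no neighbors in $T_j$ (bipartiteness, using the same argument with $j$ replaced by the original label and noting that $T^{(i)}_{b_i}$ is independent in $\HammingGraphalvedCube{n_i}$) and exactly $\sum_i n_i/2 = n/2$ neighbors in $T_{1-j}$.

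For claim (1), by Corollary~\ref{c:prime} it suffices to show connectedness of the induced subgraph. I will combine two kinds of moves: a \emph{fiber move} in block $i$ consists of changing coordinate $i$ of the $M$-witness from $b_i$ to the unique $b'_i$ given by the clique property and simultaneously changing $c_i$ to an adjacent element of $T^{(i)}_{b'_i}$, then returning; by primariness of the pair $(T^{(i)}_{b_i},T^{(i)}_{b'_i})$ the associated bipartite subgraph is connected, so repeated fiber moves let one traverse the whole of $T^{(i)}_{b_i}$ while keeping the other coordinates fixed. A \emph{block move} follows an edge of the connected subgraph induced by $M_0\cup M_1$ (connected by primariness of $M$) while updating the one changing coordinate using any available adjacent piece. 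Concatenating block moves to reach any desired $M$-witness and then fiber moves to adjust each $c_i$ yields a path between any two vertices of $T_0\cup T_1$.

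Claim (2) is immediate: the inner product of $c=(c_0\ldots c_{m-1})\in T_0\cup T_1$ with $0^{n_0}\ldots 1^{n_i}\ldots 0^{n_{m-1}}$ equals $\mathrm{wt}(c_i)\bmod 2=0$, since $c_i$ lies in an extended $1$-perfect trade mate and hence has even weight. The only mildly delicate step in the whole proof is the regularity count, which however reduces cleanly to the uniqueness part of the trade condition $|M_0\cap B|=|M_1\cap B|\in\{0,1\}$ combined with the $n_i/2$-regularity of each pair $(T^{(i)}_{b},T^{(i)}_{b'})$; everything else is bookkeeping, which is presumably why the authors call the proof straightforward.
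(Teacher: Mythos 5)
The paper omits this proof as ``straightforward,'' so there is no author's argument to compare against; your write-up supplies precisely the verification the framework is set up for, via Proposition~\ref{p:subgraph} and Corollary~\ref{c:prime}, and it is correct. The key points are all in place: the witness $(b_0\ldots b_{m-1})$ of a word of $T_j$ is unique because the sets $T^{(i)}_0,\ldots,T^{(i)}_{q_i-1}$ are pairwise disjoint (this gives disjointness of $T_0$ and $T_1$); cross-block edges are killed by parity; a within-block edge forces both witnesses into a single line $B$ of $\mathrm{H}(m,q)$, so the condition $|M_0\cap B|=|M_1\cap B|\in\{0,1\}$ yields both bipartiteness (via independence of each $T^{(i)}_{b_i}$) and the exact count $\sum_i n_i/2=n/2$; and the orthogonality computation for (2) is immediate. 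The one step you should make explicit in part (1) is your appeal to ``$M$ primary implies the graph induced by $M_0\cup M_1$ is connected'': the paper states the equivalence of primariness with connectivity (Corollary~\ref{c:prime}) only for extended $1$-perfect and Steiner trades, not for latin trades. The latin analogue is true and proved the same way --- in any line of $\mathrm{H}(m,q)$ meeting $M_0\cup M_1$, the unique representatives of $M_0$ and of $M_1$ are adjacent, so each connected component of the induced graph splits into a latin subtrade, and a disconnected graph would contradict primariness --- but a sentence to this effect is needed. With that supplied, your fiber-move/block-move argument correctly reduces connectivity of the big induced subgraph to connectivity of the $M$-graph and of each bipartite graph on $T^{(i)}_{b_i}\cup T^{(i)}_{b'_i}$, and the proof is complete.
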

Similarly, if we replace the latin trade by a latin $k$-way trade, 
then we obtain an extended $1$-perfect $k$-way trade.

A special case of $1$-perfect trade mates, 
subsets of $1$-perfect binary codes called $i$-components, 
can be constructed using another approach, see e.g. \cite{Sol:2001i-comp}. 
In particular, for each length of form $2^m-1$, 
there are primary $1$-perfect trades of volume $2^m-m-2$,
i.e., half of the cardinality of a $1$-perfect code 
(readily, the same is true for extended $1$-perfect trades and codes of length $2^m$).

\section{Concluding remarks}\label{s:end}

We presented some classification results on extended $1$-perfect trades, 
obtained by computer search, for small parameters.
In the conclusion, we would like to emphasize a connection of some of found trades with optimal codes.

The optimal distance-$4$ binary code of length $10$
(it has $40$ codewords,
is known as the Best code \cite{Best80},
and it is unique up to equivalence \cite{LitVar:Best})
form an extended $1$-perfect trade with its complement.

The optimal distance-$4$ constant-weight binary codes of lengths $n=10$ and $n=12$ and weight $n/2$
(the cardinality is $36$ and $132$, respectively) are extended $1$-perfect trade mates.


So, we found new illustrations to the fact that some good codes can be represented 
as algebraic-combinatorial objects like eigenfunctions  
(classical examples of such codes are the perfect codes; 
less obvious examples are, e.g., the binary $(n=2^k-3, 2^{n-k} , 3)$ and $(n=2^k-4, 2^{n-k} , 3)$ codes \cite{Kro:2m-3},  \cite{Kro:2m-4}).
It would be quite interesting to continue classification of the extended $1$-perfect trades in small 
dimensions and try to find more connections with good codes. 
However, the possibilities of the considered algorithm seem to be exhausted.
The attempts to start it with larger parameters did not allow even to evaluate the time needed to complete the search.
The number of the objects we search grows double-exponentially ($2^{2^{O(n)}}$), 
and the complexity of any algorithm finds a physical limit rather fast.
Some hope to find interesting examples in larger lengths by computer search
is related with the search of objects with some restrictions, 
for example, with a prescribed automorphism group.

\section*{Acknowledgements}
The author thanks the referees for their work with the manuscript,
which enable to improve the quality of the text.
The research was carried out at the Sobolev Institute of Mathematics 
at the expense of the Russian Science Foundation (Grant 14-11-00555).



\providecommand\href[2]{#2} \providecommand\url[1]{\href{#1}{#1}}

\end{document}